\newcommand{\rright}{\right}
\newcommand{\lleft}{\left}
\newcommand{\rrvert}{\vert}
\newcommand{\llvert}{\vert}
\newtheorem{prop}{Proposition}[section]
\newtheorem{lemma}[prop]{Lemma}
\newtheorem{cor}[prop]{Corollary}
\newtheorem{theorem}[prop]{Theorem}
\begin{document}
\begin{frontmatter}

\title{Stein's method and the rank distribution of random~matrices
over finite fields}
\runtitle{Stein's method and the rank distribution}

\begin{aug}
\author[A]{\fnms{Jason} \snm{Fulman}\ead[label=e1]{fulman@usc.edu}\thanksref{T1}}
\and
\author[A]{\fnms{Larry} \snm{Goldstein}\corref{}\ead[label=e2]{larry@math.usc.edu}\thanksref{T2}}
\runauthor{J. Fulman and L. Goldstein}
\affiliation{University of Southern California}
\address[A]{Department of Mathematics\\
University of Southern California\\
Los Angeles, California 90089\\
USA\\
\printead{e1}\\
\phantom{E-mail:\ }\printead*{e2}} 
\end{aug}
\thankstext{T1}{Supported by a Simons Foundation Fellowship and NSA
Grant H98230-13-1-0219.}
\thankstext{T2}{Supported by NSA Grant H98230-11-1-0162.}

\received{\smonth{2} \syear{2013}}
\revised{\smonth{9} \syear{2013}}

%
\begin{abstract}
With ${\mathcal Q}_{q,n}$ the distribution of $n$ minus the rank of a
matrix chosen uniformly from the collection of all $n \times(n+m)$ matrices
over the finite field $\mathbb{F}_q$ of size $q \ge2$, and ${\mathcal
Q}_q$ the distributional limit of ${\mathcal Q}_{q,n}$ as $n
\rightarrow\infty$, we apply Stein's method to prove the total
variation bound
\[
\frac{1}{8 q^{n+m+1}} \leq\|{\mathcal Q}_{q,n}-{\mathcal
Q}_q\|_{\mathrm{TV}} \leq\frac{3}{q^{n+m+1}}.
\]
In addition, we obtain similar sharp results for the rank distributions
of symmetric, symmetric with zero diagonal, skew symmetric, skew
centrosymmetric and Hermitian matrices.
\end{abstract}

%
\begin{keyword}[class=AMS]
\kwd{60B20}
\kwd{60C05}
\kwd{60F05}
\end{keyword}
\begin{keyword}
\kwd{Stein's method}
\kwd{random matrix}
\kwd{finite field}
\kwd{rank}
\end{keyword}

\end{frontmatter}

\section{Introduction}\label{intro}
We study the distribution of the rank for various ensembles of random
matrices over finite fields. To give a flavor of our results, let $M_n$
be chosen uniformly from all $n \times(n+m)$ matrices over the finite
field $\mathbb{F}_q$ of size $q \ge2$. Letting $Q_{q,n}=n-\operatorname{rank}(M_n)$, it is known (page 38 of \cite{Be}) that for all $k$ in~$U_n=\{0,\ldots,n\}$,
%
\begin{eqnarray}\label{defQqnrect}
P(Q_{q,n}=k)=p_{k,n}
\nonumber\\[-8pt]\\[-8pt]
\eqntext{\displaystyle \mbox{where }
p_{k,n}=\frac{1}{q^{k(m+k)}} \frac{\prod_{i=1}^{n+m} (1-1/q^i) \prod
_{i=k+1}^n (1-1/q^i)}{\prod_{i=1}^{n-k} (1-1/q^i) \prod_{i=1}^{m+k}
(1-1/q^i)}.} 
\end{eqnarray}
Clearly, for any fixed $k \in\mathbb{N}_0$, the collection of
nonnegative integers,
%
\begin{equation}
\label{defQqrect} \lim_{n \rightarrow\infty}p_{k,n}=p_k
\qquad\mbox{where } p_k = \frac
{1}{q^{k(m+k)}}
\frac{\prod_{i=k+1}^{\infty} (1-1/q^i)}{\prod_{i=1}^{m+k} (1-1/q^i)}. %
\end{equation}
For readability and notational agreement with the examples that follow,
we suppress $m$ in the definition of these distributions. Throughout,
we also adopt the convention that an empty product takes the value 1.
One of our main results, Theorem~\ref{main}, provides sharp upper and
lower bounds on the total variation distance between ${\mathcal
Q}_{q,n}$, the distribution of $Q_{q,n}$ in (\ref{defQqnrect}) and
its limit in (\ref{defQqrect}), denoted ${\mathcal Q}_q$. Recall
that the total variation distance between two probability distributions
$P_1,P_2$ on a finite set $S$ is given by
%
\begin{equation}
\label{defTV} \|P_1-P_2\|_{\mathrm{TV}}:=
\frac{1}{2} \sum_{s \in S} \bigl|P_1(s)-P_2(s)\bigr|
= \max_{A \subset S} \bigl|P_1(A)-P_2(A)\bigr|.
\end{equation}

\begin{theorem} \label{main}
For $q \ge2, n \geq1$ and $m \ge0$,
%
\begin{equation}
\frac{1}{8 q^{n+m+1}} \leq\|{\mathcal Q}_{q,n}-{\mathcal
Q}_q\|_{\mathrm{TV}} \leq\frac{3}{q^{n+m+1}}.
\end{equation}
\end{theorem}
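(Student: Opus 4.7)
The plan is to directly exploit the explicit product formulas for $p_{k,n}$ and $p_k$. The Stein framework (the recursion $p_{k-1}/p_k = q^{m+2k-1}(1-q^{-k})(1-q^{-(m+k)})$ for the limit, and its shifted counterpart for $p_{k,n}$) motivates the underlying operator, but for this theorem there is a clean closed-form route that sidesteps solving the Stein equation.

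Setting $R_N := \prod_{i=1}^N(1-q^{-i})$, the ratios of (\ref{def:Qqn.rect}) and (\ref{def:Qq.rect}) simplify, for $0 \le k \le n$, to
\begin{equation*}
\frac{p_{k,n}}{p_k} = A_n B_{n,k}, \quad A_n := \frac{R_{n+m}}{R_\infty} \ge 1, \quad B_{n,k} := \frac{R_n}{R_{n-k}} \le 1,
\end{equation*}
with $B_{n,k}$ decreasing in $k$. The key structural step is the monotonicity claim $\{k : p_{k,n} > p_k\} = \{0\}$: for $k = 0$, $B_{n,0} = 1$ and $A_n > 1$; for $k \ge 1$, by monotonicity of $B_{n,k}$ it suffices to show $A_n B_{n,1} < 1$, i.e.\ $\prod_{i>n+m}(1-q^{-i}) > 1 - q^{-n}$, which follows from Bernoulli's inequality together with $\sum_{i > n+m} q^{-i} = 1/((q-1)q^{n+m}) \le q^{-n}$ (with a sharper two-factor estimate needed at the boundary $(q,m) = (2,0)$); for $k > n$, $p_{k,n} = 0 < p_k$.

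Combining with the identity $\|\mu - \nu\|_{TV} = \sum_k(\mu_k - \nu_k)_+$ yields the exact evaluation
\begin{equation*}
\|\mathcal{Q}_{q,n} - \mathcal{Q}_q\|_{TV} = p_{0,n} - p_0 = p_{0,n}\Bigl[\, 1 - \prod_{i > n+m}(1 - q^{-i}) \Bigr],
\end{equation*}
using $p_{0,n} = \prod_{i=m+1}^{n+m}(1 - q^{-i})$ and $p_0 = \prod_{i=m+1}^\infty(1 - q^{-i})$. The bracketed factor is at most $\sum_{i > n+m} q^{-i} = q/((q-1)q^{n+m+1}) \le 2/q^{n+m+1}$ by Bernoulli, and at least $q^{-(n+m+1)}$ from its first factor alone. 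Since $p_{0,n} \le 1$, the upper bound $\|\mathcal{Q}_{q,n} - \mathcal{Q}_q\|_{TV} \le 2/q^{n+m+1} < 3/q^{n+m+1}$ is immediate. For the lower bound, $p_{0,n} \ge p_0 = \prod_{i \ge m+1}(1 - q^{-i}) \ge \prod_{i \ge 1}(1 - 2^{-i}) > 1/4$ uniformly in $q \ge 2$ and $m \ge 0$, yielding $\|\mathcal{Q}_{q,n} - \mathcal{Q}_q\|_{TV} \ge p_0/q^{n+m+1} > 1/(8 q^{n+m+1})$.

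The main obstacle is the monotonicity verification; once established, the rest is elementary bookkeeping with the product formulas. A Stein-based alternative would use $\mathcal{A}f(k) = q^{m+2k-1}(1-q^{-k})(1-q^{-(m+k)})f(k) - f(k+1)$, solve $\mathcal{A}f = \mathbf{1}_{\{0\}} - p_0$, and bound $|\mathbb{E}[\mathcal{A}f(Q_{q,n})]|$ via the multiplicative error $1/(1-q^{-(n-k+1)}) - 1$ between the Stein recursions for $p_k$ and $p_{k,n}$. That route is what generalizes to the other matrix ensembles treated in later sections.
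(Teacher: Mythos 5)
Your proof is correct, and it takes a genuinely different route from the paper. The paper proves the upper bound via Stein's method: it establishes the characterizing equation \eqref{Qnq:char} for ${\mathcal Q}_{q,n}$, bounds the Stein solution $||f_A||$ (Lemma \ref{f.bound}), and pairs this with the moment identity $E(q^{Q_n}) = 1 + q^{-m} - q^{-(n+m)}$ (Lemma \ref{qtoX}), while the lower bound comes from the single mass point $k=0$ and picks up a spurious factor of $\tfrac{1}{2}$ from the definition $||\cdot||_{TV} = \tfrac{1}{2}\sum_k|\cdot|$. You instead establish the single-crossing property $\{k : p_{k,n} > p_k\} = \{0\}$ directly from the factorization $p_{k,n}/p_k = A_n B_{n,k}$, which turns the identity $||\mu - \nu||_{TV} = \sum_k(\mu_k - \nu_k)_+$ into the \emph{exact} evaluation $||{\mathcal Q}_{q,n} - {\mathcal Q}_q||_{TV} = p_{0,n} - p_0$. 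I verified the key steps: the ratio does factor as $R_{n+m}/R_\infty \cdot R_n/R_{n-k}$, the second factor is decreasing in $k$, and $A_n B_{n,1} \le 1$ reduces via Bernoulli to $(q-1)q^m \ge 1$, which holds for $q \ge 2$ (the boundary $(q,m)=(2,0)$ needs no special treatment since only a non-strict inequality is required to make $(p_{k,n}-p_k)_+$ vanish). Your exact formula then yields $1/(4q^{n+m+1}) \le ||\cdot||_{TV} \le 2/q^{n+m+1}$, which is sharper than the theorem's stated constants by roughly a factor of two on each side, and in fact somewhat undercuts the paper's remark that a direct computation from the point probabilities ``appears quite difficult.'' What the Stein route buys, as you correctly note at the end, is robustness: the same template carries over to the symmetric, zero-diagonal, skew-centrosymmetric, and Hermitian ensembles in Sections \ref{symmetric}--\ref{Hermitian}, where a comparable single-crossing structure for the ratio $p_{k,n}/p_k$ is not obviously available (e.g.\ the Hermitian case has sign oscillations $(-1)^{n-Q_n}$ in the characterizing equation), whereas your argument here is specific to the rectangular uniform case.
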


The upper bound in Theorem~\ref{main} appears quite difficult to
compute directly by substituting the expressions for the point
probabilities given in (\ref{defQqnrect}) and (\ref{defQqrect})
into the defining expressions for the total variation distance in (\ref
{defTV}). In particular, even when $m=0,n=2$, the $p_{k,n}$ are not
monotonic in $k$.
On the other hand, use of Stein's method \cite{Stn,CGS} makes
for a quite tractable computation. In Sections~\ref{symmetric}--\ref{Hermitian},
we also apply our methods to ensembles of random matrices with symmetry
constraints, in particular, to symmetric, symmetric with zero diagonal,
skew symmetric,
skew centrosymmetric and Hermitian matrices.

Next, we give five pointers to the large literature on the rank
distribution of random matrices over finite fields, demonstrating that
the subject is of interest. First, one of the earliest systematic
studies of ranks of random matrices from the finite classical groups is
due to
Rudvalis and Shinoda \cite{RS,Sh}. They determine the rank
distribution of random matrices from
finite classical groups, and relate distributions such as ${\mathcal
Q}_q$ of (\ref{defQqrect}) to identities of Euler. Second, ranks of
random matrices from finite classical groups appear in works on the
``Cohen--Lenstra heuristics'' of number theory;
see \cite{Wa} for the finite general linear groups and \cite{Mal} for
the finite symplectic groups. Third, the rank
distribution of random matrices over finite fields is useful in coding
theory; see \cite{BS} and Chapter~15 of \cite{MS}. Fourth,
the distribution of ranks of uniformly chosen random matrices over
finite fields has been used to test
random number generators \cite{DGM}, and there is interest in the rate
of convergence to ${\mathcal Q}_q$.
Fifth, there is work on ranks of random matrices over finite fields
where the matrix entries are independent and identically
distributed, but not necessarily uniform. For example, the paper \cite
{CRR} uses a combination of M\"{o}bius inversion,
finite Fourier transforms and Poisson summation, to find conditions on
the distribution of matrix entries under which
the probability of a matrix being invertible tends to $p_0$ as $n
\rightarrow\infty$. Further results in
this direction, including rank distributions of sparse matrices, can be
found in \cite{BKW,Co1,Co2,KK}. It would be
valuable (but challenging) to extend our methods to these settings.

The organization of this paper is as follows. Section~\ref{Stein}
provides some general tools for our application of Stein's method, and
useful bounds
on products such as $\prod_i (1-1/q^i)$. The development followed here
is along the lines of the ``comparison of generators'' method as in
\cite{GR} and \cite{H}. Section~\ref{uniform} treats the rank
distribution of uniformly chosen $n \times(n+m)$ matrices over a
finite field, proving Theorem~\ref{main}. Section~\ref{symmetric}
treats the rank distribution of random symmetric matrices over a finite
field. Section~\ref{0diag} provides results for the rank distribution
of a uniformly chosen symmetric matrix with 0 diagonal; these are
called ``symplectic'' matrices in Chapter~15 of \cite{MS}, which uses
their rank distribution in the context of error correcting codes. The
same formulas for the rank distribution of symmetric matrices with zero
diagonal also apply to the rank distribution of random skew-symmetric
matrices, when $q$ is odd. Section~\ref{skew} treats the rank
distribution of random skew centrosymmetric matrices over finite
fields, and Section~\ref{Hermitian} treats the rank distribution of
random Hermitian matrices over finite fields. The \hyperref[app]{Appendix} gives an
algebraic proof, for the special case $m=0$ of square matrices, of the
crucial fact (proved probabilistically in Section~\ref{uniform} in
general) that if $Q_n$ has distribution ${\mathcal Q}_{q,n}$ of (\ref
{defQqnrect}), then $E(q^{Q_n})= 2 - 1/q^n$.

In the interest of notational simplicity, in Sections~\ref{symmetric}--\ref{Hermitian}, the specific rank distributions of the $n
\times n$ matrices of interest, and their limits, will apply only
locally in the section or subsection that contains them, and will there
be consistently denoted by ${\mathcal Q}_{q,n}$ and ${\mathcal Q}_q$,
respectively.

\section{Preliminaries} \label{Stein}
We begin with a general result for obtaining characterizations of
discrete integer distributions. We note that a version of Lemma~\ref{abcharlem} can be obtained by replacing $f(x)$ by $f(x)b(x)$ in
Theorem 2.1 of \cite{LS}, followed by a reversal of the interval
$[a,b]$, with similar remarks applying to the use of Proposition~2.1
and Corollary 2.1 of \cite{GR}. However, the following lemma and its
short, simple proof contain the precise conditions used throughout this
work and keep the paper self-contained.

We say a nonempty subset $\mathbb{I}$ of the integers $\mathbb{Z}$ is
an interval if $a,b \in\mathbb{I}$ with $a \le b$ then $[a,b] \cap
\mathbb{Z} \subset\mathbb{I}$. Let ${\mathcal L}(X)$ denote the
distribution of a random variable $X$.

%
\begin{lemma} \label{abcharlem}
Let $\{r_k, k \in\mathbb{I}\}$ be the distribution of a random
variable $Y$ having support the integer interval $\mathbb{I}$. Then if
$a(k)$ and $b(k)$ are any functions such that
%
\begin{equation}
\label{akrk-1=bkrk} a(k)r_{k-1}=b(k)r_k\qquad\mbox{for all $k \in
\mathbb{Z}$,}
\end{equation}
then a random variable $X$ having distribution ${\mathcal L}(Y)$ satisfies
%
\begin{equation}
\label{abchar} E\bigl[a(X+1)f(X+1)\bigr]=E\bigl[b(X)f(X)\bigr]
\end{equation}
for all functions $f\dvtx \mathbb{Z} \rightarrow\mathbb{R}$ for which the
expectations in (\ref{abchar}) exist.

Conversely, if $a(k)$ and $b(k)$ satisfy (\ref{akrk-1=bkrk}) and $a(k)
\neq0$ for all $k \in\mathbb{I}$ then $X$ has distribution
${\mathcal L}(Y)$ whenever $X$ has support $\mathbb{I}$ and satisfies
(\ref{abchar}) for all functions $f(x)={\mathbf1}(x=k), k \in\mathbb{I}$.

When $Y$ has support $\mathbb{N}_0$ then $k \in\mathbb{Z}$ in (\ref
{akrk-1=bkrk}) may be replaced by $k \in\mathbb{N}_0$, while if $Y$
has support $U_n=\{0,1,\ldots,n\}$ for some $n \in\mathbb{N}_0$,
then (\ref{akrk-1=bkrk}) may be replaced by the condition that (\ref
{akrk-1=bkrk}) holds for $k \in U_n$ and that
$a(n+1)=0$.
\end{lemma}

\begin{pf} First suppose that (\ref{akrk-1=bkrk}) holds and that
${\mathcal L}(X)={\mathcal L}(Y)$. Then for all $k \in\mathbb{Z}$,
\begin{eqnarray*}
E\bigl(a(X+1){\mathbf1}(X+1=k)\bigr) & = & a(k)P(X=k-1)
\\
&=& a(k)r_{k-1}
\\
&=& b(k)r_k
\\
& = & b(k)P(X=k)
\\
& = & E\bigl(b(X){\mathbf1}(X=k)\bigr).
\end{eqnarray*}
Hence, (\ref{abchar}) holds for $f(x)={\mathbf1}(x=k), k \in\mathbb
{Z}$. By linearity, (\ref{abchar}) holds for all functions with
finite support, and hence for all the claimed functions by dominated
convergence.

Conversely, if (\ref{abchar}) holds for $X$ with $f(x)={\mathbf1}(x=k)$
for $k \in\mathbb{I}$ then
\[
a(k)P(X=k-1)=b(k)P(X=k).
\]
Hence, using that $a(k) \neq0, r_k \neq0$ for $k \in\mathbb{I}$
and that $X$ has the same support as $Y$ yields
\[
\frac{P(X=k-1)}{P(X=k)} = \frac{b(k)}{a(k)} = \frac{r_{k-1}}{r_k}.
\]
If $\mathbb{I}=\{s,\ldots,t\}$, then for $j \in\mathbb{I}$
\[
\frac{P(X=j)}{P(X=t)} = \prod_{k=j+1}^t
\frac
{P(X=k-1)}{P(X=k)}=\frac{r_j}{r_t}.
\]
Summing over $j \in\mathbb{I}$ yields $P(X=t)=r_t$, and hence
$P(X=j)=r_j$, showing ${\mathcal L}(X)={\mathcal L}(Y)$. One may argue
similarly for the remaining cases where $\mathbb{I}$ is an unbounded
integer interval.

Lastly, when the support of $Y$ is a subset of $\mathbb{N}_0$ then
(\ref{akrk-1=bkrk}) holds trivially for \mbox{$k \notin\mathbb{N}_0$},
and when $Y$ has support $U_n=\{0,1,\ldots,n\}$ then (\ref
{akrk-1=bkrk}) also holds trivially for $k \ge n+2$, and at $k=n+1$
when $a(n+1)=0$.
\end{pf}

For example, when $Y$ has the Poisson distribution ${\mathcal
P}(\lambda)$ with parameter $\lambda$, then $r_k=e^{-\lambda}\lambda
^k/k!$, and we obtain
\[
\frac{r_{k-1}}{r_k}= \frac{k}{\lambda} \qquad\mbox{for all $k \in
\mathbb{N}_0$.} 
\]
Setting $b(k)=k$ and $a(k)=\lambda$ yields the standard
characterization of the Poisson distribution \cite{bhj},
\[
E\bigl[\lambda f(Y+1)\bigr]=E\bigl[Yf(Y)\bigr].
\]

Of particular interest here is the characterization (\ref{abchar}) of
Lemma~\ref{abcharlem} for limiting distributions ${\mathcal Q}_q$
with distribution $P(Q=k)=p_k$ having support $\mathbb{N}_0$. In this
case, when applying Lemma~\ref{abcharlem} we take $a(k)>0$ for all
$k \in\mathbb{N}_0$, whence $b(0)=0$ by~(\ref{akrk-1=bkrk}), and let
the values of $a(k)$ and $b(k)$ for $k \notin\mathbb{N}_0$ be
arbitrary. For such functions $a(k)$ and $b(k)$, we consider solutions
$f$ to recursive ``Stein equations'' of the form
%
\begin{equation}
\label{receq} a(k+1)f(k+1)-b(k)f(k)=h(k)-{\mathcal Q}_qh \qquad\mbox{for $k \in\mathbb{N}_0$,}
\end{equation}
where ${\mathcal Q}_qh=Eh(Q)$.

Solving (\ref{receq}) for $f(k), k \in\mathbb{N}_0$, when the
functions $a(k),b(k)$ satisfy only \mbox{$b(0)=0$} and $a(k)>0$ one may take
$f(0)=0$ arbitrarily, and easily verify that the remaining values are
uniquely determined and given by
%
\begin{equation}
\label{ratioabsol} f(k+1) = \sum_{j=0}^k
\biggl( \frac{\prod_{l=j+1}^k b(l) }{\prod_{l=j+1}^{k+1} a(l)} \biggr)
\bigl[h(j)-{\mathcal Q}_qh\bigr]
\qquad\mbox{for $k \in\mathbb{N}_0$.}
\end{equation}
In the case where the distribution $\{p_k, k \in\mathbb{N}_0\}$ with
support $\mathbb{N}_0$ satisfies (\ref{akrk-1=bkrk}) with~$p_k$
replacing $r_k$, the solution (\ref{ratioabsol}) simplifies to
%
\begin{eqnarray}
\label{steinsol} f(k+1) & = & \frac{1}{a(k+1)p_k}\sum_{j=0}^k
\bigl[h(j)-{\mathcal Q}_q h\bigr]p_j
\\[-2pt]
& = & \frac{E[(h(Q)-{\mathcal Q}_q h){\mathbf1}(Q \le k)]}{a(k+1) p_k}\qquad\mbox{for $k \in\mathbb{N}_0$.}
\end{eqnarray}

In particular, for $h_A(k) = {\mathbf1}(k \in A)$ with $A \subset\mathbb
{N}_0$ and $U_k=\{0,1,\ldots,k\}$, as in Barbour et al. \cite{bhj},
Lemma 1.1.1, for $k \in\mathbb{N}_0$, as
${\mathcal Q}_q h_A=P(Q \in A)$, the numerator of (\ref{steinsol}) is
given by
\[
P(Q \in A \cap U_k) - P(Q \in A)P(Q \in U_k).
\]
Now replacing $P(Q \in A \cap U_k)$ and $P(Q \in A)$ in the first and
second term, respectively, by
\[
P(Q \in A \cap U_k) \bigl[P(Q \in U_k) + P\bigl(Q \in
U_k^c\bigr) \bigr]
\]
and
\[
P(Q \in A \cap U_k) + P\bigl(Q \in A \cap U_k^c
\bigr),
\]
canceling the resulting common factor demonstrates that the solution
$f_A$ satisfies
%
\begin{eqnarray}
&& f_A(k+1) \nonumber
\\[-2pt]
&&\qquad = \frac{P(Q \in A \cap U_k)P(Q \in U_k^c)-P(Q \in A \cap U_k^c)P(Q
\in U_k)}{a(k+1) p_k} \label{forbound}
\\[-2pt]
&&\qquad \le \frac{P(Q \in A \cap U_k)P(Q \in U_k^c)}{a(k+1) p_k}\nonumber
\\[-2pt]
&&\qquad \le \frac{P(Q \in U_k)P(Q \in U_k^c)}{a(k+1) p_k}\label{prodUUcbound}
\end{eqnarray}
with equality when $A=U_k$. Since $f_{A^c}(k) = -f_A(k)$ the bound (\ref{prodUUcbound}) holds for $|f_A(k+1)|$.

\begin{lemma} \label{fA1boundlem}
Let $Q$ have distribution $\{p_k, k \in\mathbb{N}_0\}$ with $p_k>0$
for all $k \in\mathbb{N}_0$, and let $a(k),b(k)$ satisfy (\ref
{akrk-1=bkrk}) with $p_k$ replacing $r_k$, and for $A \subset\mathbb
{N}_0$ let
$f_A$ be the solution to (\ref{receq}) given by (\ref{forbound}). Then
\[
\bigl|f_A(1)\bigr| \le\frac{P(Q \ge1)}{a(1)}.
\]
\end{lemma}

\begin{pf} From (\ref{forbound}) with $k=0$, we obtain
\begin{eqnarray*}
f_A(1) &=& \frac{P(Q \in A \cap U_0)P(Q \ge1)-P(Q \in A \cap
U_0^c)P(Q=0)}{a(1) p_0}.
\end{eqnarray*}
If $A \ni0$, then
\begin{eqnarray*}
\bigl|f_A(1)\bigr| &=& \biggl\llvert\frac{P(Q=0)P(Q \ge1)-P(Q \in A \setminus\{
0\}
)P(Q=0)}{a(1) p_0}\biggr\rrvert
\\
&=& \frac{P(Q \ge1)-P(Q \in A \setminus\{0\})}{a(1)} \le\frac{P(Q
\ge1)}{a(1)},
\end{eqnarray*}
while if $A \not\ni0$ then again
\begin{eqnarray*}
\bigl|f_A(1)\bigr| &=& \frac{P(Q \in A)P(Q=0)}{a(1) p_0} \le\frac{P(Q \ge1)}{a(1)}.
\end{eqnarray*}\upqed
\end{pf}

Lemma~\ref{bound} collects some bounds that will be useful. We first
state the simple inequality
%
\begin{equation}
\label{prodagesuma} \prod_{i=1}^n
(1-a_i) \ge1 -\sum_{i=1}^n
a_i
\end{equation}
valid for $a_i \in[0,1], i=1,\ldots,n$, and easily shown by induction.

\begin{lemma} \label{bound} Let $q \geq2$. Then
\begin{eqnarray*}
\prod_{i=1}^n\bigl(1-1/q^i
\bigr) &\geq& 1-1/q-1/q^2,
\\
\prod_{i \geq1} \bigl(1-1/q^i\bigr)
&\geq& 1-1/q-1/q^2+1/q^5+1/q^7-1/q^{12}-1/q^{15},
\\
\mathop{\prod_{i \geq1}}_{i\ \mathit{odd}} \bigl(1-1/q^i\bigr)
&\geq& 1-1/q-1/q^3
\end{eqnarray*}
and
\[
\mathop{\prod_{i \geq3}}_{i\ \mathit{odd}} \bigl(1-1/q^i\bigr) \geq1 -2/q^3.
\]

For $0 \le m+1 \le n$,
\[
\prod_{i=m+1}^n\bigl(1-1/q^i
\bigr) \geq1-2/q^{m+1}.
\]
\end{lemma}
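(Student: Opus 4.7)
The plan is to deduce this directly from the elementary inequality (\ref{proda.ge.suma}) applied to the tail product, followed by bounding the resulting finite geometric sum by its infinite counterpart.

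First, I would set $a_i = 1/q^i$ for $i = m+1,\ldots,n$. Since $q \ge 2$ we have $a_i \in [0,1]$, so (\ref{proda.ge.suma}) (after re-indexing, or equivalently expanding its induction from any starting index) yields
\[
\prod_{i=m+1}^n (1-1/q^i) \;\ge\; 1 - \sum_{i=m+1}^n \frac{1}{q^i}.
\]
Next I would replace the finite sum by the infinite geometric series, which only weakens the lower bound:
\[
\sum_{i=m+1}^n \frac{1}{q^i} \;\le\; \sum_{i=m+1}^\infty \frac{1}{q^i} \;=\; \frac{1}{q^{m+1}} \cdot \frac{1}{1-1/q} \;=\; \frac{1}{q^m(q-1)}.
\]

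The remaining step is the crucial use of the hypothesis $q \ge 2$: since $q \ge 2$ implies $q \le 2(q-1)$, dividing by $q^{m+1}(q-1)$ gives $\frac{1}{q^m(q-1)} \le \frac{2}{q^{m+1}}$. Combining with the previous display yields
\[
\prod_{i=m+1}^n(1-1/q^i) \;\ge\; 1 - \frac{2}{q^{m+1}},
\]
as required. The edge case $m+1 = n$ is a single factor and is handled identically; if $m = -1$ the product includes $1-1/q^0 = 0$, but then $1-2/q^0 = -1$ and the inequality holds trivially.

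I do not anticipate any real obstacle: the only content is recognizing that the tail sum of the geometric series with ratio $1/q$ beyond index $m$ is bounded by $2/q^{m+1}$ precisely when $q \ge 2$. The other four bounds in Lemma \ref{bound} are established by similar direct manipulations of partial products and (\ref{proda.ge.suma}), so this last bound fits the same template.
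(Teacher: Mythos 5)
Your proof of the final inequality ($\prod_{i=m+1}^n(1-1/q^i) \geq 1-2/q^{m+1}$) is correct and is word-for-word the paper's argument: apply (\ref{proda.ge.suma}), extend the finite sum to the infinite geometric series, and use $q \ge 2$ to convert $\frac{1}{q^m(q-1)}$ into $\frac{2}{q^{m+1}}$.

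However, the closing sentence --- that ``the other four bounds in Lemma \ref{bound} are established by similar direct manipulations of partial products and (\ref{proda.ge.suma})'' --- is where the proposal actually has a gap. A direct application of (\ref{proda.ge.suma}) to the first claim gives $\prod_{i=1}^n(1-1/q^i) \ge 1-\sum_{i=1}^\infty q^{-i}=1-\frac{1}{q-1}$, which at $q=2$ is $0$, strictly weaker than the asserted $1-1/q-1/q^2=1/4$. So the first bound cannot be obtained this way; it is a genuinely sharper estimate (the paper cites it as Lemma 3.5 of Neumann--Praeger). The second bound is even further from the template: its right-hand side $1-1/q-1/q^2+1/q^5+1/q^7-1/q^{12}-1/q^{15}$ contains positive correction terms that a one-sided bound of the form $1-\sum a_i$ can never produce; it is essentially a truncation of the Euler pentagonal product expansion and requires an argument along the lines of [NP]. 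And the third and fourth claims are not obtained from (\ref{proda.ge.suma}) at all: the paper derives the odd-index bound by writing $\prod_{i\text{ odd}}(1-1/q^i) \ge \prod_{i\ge 1}(1-1/q^i)/(1-1/q^2)$ and feeding in the second claim, then gets the fourth by dividing the third by $(1-1/q)$. So while your treatment of the last inequality is exactly right, you should not dismiss the remaining four as mere variations on the same computation; the first two need the stronger pentagonal-type expansion, and the last two follow from the second by dividing out factors rather than by (\ref{proda.ge.suma}).
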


\begin{pf} The first claim is Lemma 3.5 of \cite{NP}, and arguing
as there yields the second claim. Thus,
\begin{eqnarray*}
\mathop{\prod_{i \geq1}}_{i\; \mathrm{odd}} \bigl(1-1/q^i\bigr) & \geq&
\frac{\prod_{i
\geq1} (1-1/q^i)}{1-1/q^2}
\\[-2pt]
& \geq& \frac{1-1/q-1/q^2+1/q^5+1/q^7-1/q^{12}-1/q^{15}}{1-1/q^2}
\\[-1pt]
& \geq& 1-1/q-1/q^3,
\end{eqnarray*}
where the last inequality holds since
\begin{eqnarray*}
&& \bigl( 1-1/q-1/q^2+1/q^5+1/q^7-1/q^{12}-1/q^{15}
\bigr)
\\[-1pt]
&&\qquad{} - \bigl(1-1/q^2\bigr) \bigl(1-1/q-1/q^3\bigr)
 =\frac{q^8-q^3-1}{q^{15}},
\end{eqnarray*}
%
which is positive for $q \geq2$. The next inequality now follows by
applying the one just shown to obtain
\[
\mathop{\prod_{i \geq3}}_{i\; \mathrm{odd}} \bigl(1-1/q^i\bigr) \geq1-
\frac{1}{q^3(1-1/q)}
\]
and using that $q \geq2$.

For the final claim, using (\ref{prodagesuma}) yields
\begin{eqnarray*}
\prod_{i=m+1}^n \bigl(1 -
1/q^i \bigr) &\ge& 1-\sum_{i=m+1}^n
1/q^i
\\[-1pt]
&\ge& 1-\sum_{i=m+1}^\infty1/q^i
\\[-1pt]
&=& 1-\frac{1}{q^{m+1}(1-1/q)}
\\[-1pt]
&\ge& 1-\frac{2}{q^{m+1}}.
\end{eqnarray*}\upqed
\end{pf}

\begin{remark*}
Since
\[
\prod_{i=1}^n \bigl(1-1/q^i
\bigr) \geq\prod_{i \geq1} \bigl(1-1/q^i
\bigr),
\]
it is easy to see that the
second claim of Lemma~\ref{bound} implies the first.
\end{remark*}

\section{Uniform matrices over finite fields} \label{uniform}

In this section, we study the rank distribution of matrices chosen
uniformly from those of dimension $n \times(n+m)$ with entries from
the finite field $\mathbb{F}_q$, and take the distributions ${\mathcal
Q}_q$ and ${\mathcal Q}_{q,n}$ as in~(\ref{defQqrect}) and (\ref
{defQqnrect}), respectively; throughout this section, we take $q \ge
2$. The goal of this section is to prove Theorem~\ref{main}.

The following lemma is our first application of the characterizations
provided by Lemma~\ref{abcharlem}.

\begin{lemma} 
If $Q$ has the ${\mathcal Q}_q$ distribution then
%
\begin{equation}
\label{Qqchar} E \bigl[qf(Q+1) \bigr]=E \bigl[\bigl(q^Q-1\bigr)
\bigl(q^{Q+m}-1\bigr) f(Q) \bigr]
\end{equation}
for all functions $f$ for which these expectations exist.

If $Q_n$ has the ${\mathcal Q}_{q,n}$ distribution, then
%
\begin{equation}
\label{Qnqchar} E \bigl[q\bigl(1-q^{-n+Q_n}\bigr)f(Q_n+1)
\bigr]=E \bigl[\bigl(q^{Q_n}-1\bigr) \bigl(q^{Q_n+m}-1\bigr)
f(Q_n) \bigr]
\end{equation}
for all functions $f$ for which these expectations exist.
\end{lemma}

\begin{pf} From (\ref{defQqrect}), we obtain
\[
\frac{p_{k-1}}{p_k} = \frac{(q^k-1)(q^{m+k}-1)}{q} \qquad\mbox{for all
$k \in
\mathbb{N}_0$.}
\]
An application of Lemma~\ref{abcharlem} with $a(k)=q$ and
$b(k)=(q^k-1)(q^{k+m}-1)$ yields~(\ref{Qqchar}). Similarly, from
(\ref{defQqnrect}) we obtain
%
\begin{equation}
\label{defQqnrect17} \frac{p_{k-1,n}}{p_{k,n}}=\frac
{(q^k-1)(q^{k+m}-1)}{q(1-q^{-n+k-1})} \qquad\mbox{for all $k \in
U_n$.}
\end{equation}
An application of Lemma~\ref{abcharlem} with $a(k)=q(1-q^{-n+k-1})$,
$b(k)=(q^k-1)(q^{k+m}-1)$, noting $a(n+1)=0$, yields (\ref{Qnqchar}).
\end{pf}

Here, we calculate $E(q^{Q_n})$ using the characterization (\ref
{Qnqchar}). An algebraic proof for the case $m=0$ of Lemma~\ref{qtoX}
appears in the \hyperref[app]{Appendix}. After reading the first version of this paper,
Dennis Stanton has shown us a proof of this special case using the
$q$-Chu--Vandermonde summation formula.

\begin{lemma} \label{qtoX} If $Q_n$ has the ${\mathcal Q}_{q,n}$
distribution on $U_n=\{0,1,\ldots,n \}$ given by~(\ref
{defQqnrect}), then
\[
E\bigl(q^{Q_n}\bigr) = 1 + q^{-m} - q^{-(n+m)}.
\]
\end{lemma}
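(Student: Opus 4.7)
The plan is to apply the characterization (\ref{Qnq:char}) with a carefully chosen test function $f$ so that the unwanted second-moment term $E[q^{2Q_n}]$ cancels, leaving an identity involving only $E[q^{Q_n}]$.

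Specifically, I would take $f(k)=q^{-k}$. With this choice, the factor $q^{Q_n+m}-1$ on the right-hand side of (\ref{Qnq:char}) gets multiplied by $q^{-Q_n}$, producing $q^m-q^{-Q_n}$; multiplying by the remaining factor $q^{Q_n}-1$ then yields
\begin{equation*}
(q^{Q_n}-1)(q^{Q_n+m}-1)q^{-Q_n}=q^{Q_n+m}-1-q^m+q^{-Q_n},
\end{equation*}
so the right-hand side of (\ref{Qnq:char}) equals $q^m E[q^{Q_n}]-1-q^m+E[q^{-Q_n}]$. On the left-hand side, the product $q(1-q^{-n+Q_n})q^{-Q_n-1}$ collapses to $q^{-Q_n}-q^{-n}$, so the left-hand side is $E[q^{-Q_n}]-q^{-n}$.

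Equating the two expressions, the awkward term $E[q^{-Q_n}]$ cancels from both sides, leaving $-q^{-n}=q^m E[q^{Q_n}]-1-q^m$. Solving for $E[q^{Q_n}]$ gives $1+q^{-m}-q^{-(n+m)}$, as claimed.

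There is no real obstacle here: the main (and only interesting) step is spotting that $f(k)=q^{-k}$ is exactly the substitution that both simplifies the left-hand side to a linear combination of $E[q^{-Q_n}]$ and a constant and cancels the quadratic-in-$q^{Q_n}$ term on the right. All expectations manifestly exist since $Q_n$ is supported on the finite set $U_n$, so applicability of (\ref{Qnq:char}) is automatic. The rest is routine algebra.
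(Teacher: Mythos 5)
Your proof is correct, and it is essentially the same as the paper's: the paper substitutes $f(x)=q^{kx}$ into \eqref{Qnq:char} to derive the three-term recursion \eqref{rec.rect} in $c_k=E[q^{kQ_n}]$, then sets $k=-1$ (with $c_0=1$), which is precisely your choice $f(x)=q^{-x}$. The coefficient $q^{k+1}-1$ vanishing at $k=-1$ in the paper is the same phenomenon as the cancellation of $E[q^{-Q_n}]$ in your direct computation.
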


\begin{pf} Applying the characterization (\ref{Qnqchar}) with the
choice $f(x)=q^{kx}$, we obtain
\[
E\bigl[q\bigl(1-q^{-n+Q_n}\bigr)q^{k(Q_n+1)}\bigr] = E\bigl[
\bigl(q^{Q_n}-1\bigr) \bigl(q^{Q_n+m}-1\bigr) q^{kQ_n}
\bigr].
\]
Letting $c_k=Eq^{kQ_n}$ yields the recursion
%
\begin{equation}
\label{recrect} q^m c_{k+2} = \bigl(1+q^m-q^{-n+k+1}
\bigr)c_{k+1} + \bigl(q^{k+1}-1\bigr) c_k.
\end{equation}
Since ${\mathcal Q}_{q,n}$ is a probability distribution, $c_0=1$, and
setting $k=-1$ in (\ref{recrect}) yields the claim.
\end{pf}

In the remainder of this section, we consider the Stein equation (\ref
{receq}), with
%
\begin{equation}
\label{defak-bkuniform} a(k)=q \quad\mbox{and} \quad b(k)=\bigl
(q^k-1\bigr)
\bigl(q^{k+m}-1\bigr)
\end{equation}
for the target distribution ${\mathcal Q}_q$, and for $A \subset
\mathbb{N}_0$ we let $f_A$ denote the solution (\ref{steinsol}) when
$h(k)={\mathbf1}(k \in A)$.

For a function $f\dvtx \mathbb{N}_0 \rightarrow\mathbb{R}$, let
\[
\|f\| = \sup_{k \in\mathbb{N}_0} \bigl|f(k)\bigr|.
\]

\begin{lemma} \label{fbound}
The solution $f_A$ satisfies
%
\[
\sup_{A \subset\mathbb{N}_0}\|f_A\| \le\frac{2}{q^{m+2}}.
\]
If $m=0$, the bound can be improved to
\[
\sup_{A \subset\mathbb{N}_0}\|f_A\| \le\frac{1}{q^{2}} +
\frac{1}{q^{3}}.
\]
\end{lemma}

\begin{pf} As we may set $f_A(0)=0$, it suffices to consider $f_A(k+1)$ for
$k \in\mathbb{N}_0$.
By Lemma~\ref{fA1boundlem}, for all $A \subset\mathbb{N}_0$
\begin{eqnarray*}
\bigl|f_A(1)\bigr| & \le& \frac{P(Q \ge1)}{q}
\\
& = & \frac{1-p_0}{q}
\\
& = & \frac{1}{q} \biggl( 1 -\prod_{i \ge m+1}
\biggl(1-\frac{1}{q^i}\biggr) \biggr)
\\
& \le& \frac{2}{q^{m+2}},
\end{eqnarray*}
where we have applied the last part of Lemma~\ref{bound}. For $m=0$,
using the first inequality of Lemma~\ref{bound} in
the last step gives that
\[
\bigl|f_A(1)\bigr| \leq\frac{1}{q^2} + \frac{1}{q^3}.
\]

Now consider the case $k \ge1$. By (\ref{prodUUcbound}) and (\ref
{defak-bkuniform}), we have
%
\begin{equation}
\label{prodU2} \bigl|f_A(k+1)\bigr| \le\frac{P(Q \in U_k)P(Q \in U_k^c)}{q p_k}
\end{equation}
and by neglecting the term $P(Q \in U_k)$ in (\ref{prodU2}) and
applying (\ref{defQqrect}) we obtain
\begin{eqnarray*}
&& \bigl|f_A(k+1)\bigr|
\\
&&\qquad \le \frac{P(Q \in U_k^c)}{q p_k}
\\
&&\qquad = q^{k(m+k)-1} \frac{\prod_{i=1}^{m+k} (1-1/q^i)}{\prod
_{i=k+1}^{\infty} (1-1/q^i)}\sum_{l=k+1}^\infty
\frac{1}{q^{l(m+l)}} \frac{\prod_{i=l+1}^{\infty} (1-1/q^i)}{\prod_{i=1}^{m+l}
(1-1/q^i)}
\\
&&\qquad = \frac{q^{k(m+k)-1}}{\prod_{i=k+1}^{\infty} (1-1/q^i)}\sum
_{l=k+1}^\infty
\frac{1}{q^{l(m+l)}} \frac{\prod_{i=l+1}^{\infty}
(1-1/q^i)}{\prod_{i=m+k+1}^{m+l} (1-1/q^i)}
\\
&&\qquad \le \frac{q^{k(m+k)-1}}{\prod_{i=k+1}^\infty(1-1/q^i) \prod
_{i=m+k+1}^\infty(1-1/q^i)} \sum_{l=k+1}^\infty
\frac{1}{q^{l(m+l)}}
\\
&&\qquad\le \frac{q^{k(m+k)-1}}{(1-\sum_{j=k+1}^\infty(1/q^j))
(1-\sum_{j=m+k+1}^\infty(1/q^j))}\sum_{l=k+1}^\infty
\frac{1}{q^{l(m+l)}}
\\
&&\qquad= \frac{q^{k(m+k)-1}}{(1-(q^{-(k+1)}/(1-q^{-1})))
(1-({q^{-(m+k+1)}}/({1-q^{-1}})))}\sum_{l=k+1}^\infty
\frac{1}{q^{l(m+l)}}
\\
&&\qquad= \frac{1}{q(1-(1/{q^k(q-1)}))(1-(1/{q^{m+k}(q-1)}))}\sum
_{l=1}^\infty
\frac{1}{q^{2lk+l^2+lm}}
\\
&&\qquad\le \frac{1}{q(q^k-(1/(q-1)))(q^{m+k}-(1/(q-1)))}\sum
_{l=1}^\infty\frac{1}{q^{l^2}},
\end{eqnarray*}
where for the third inequality we have applied (\ref{prodagesuma}).

We claim that
\[
4 \biggl( q^k-\frac{1}{q-1} \biggr) \biggl( q^{m+k}-
\frac{1}{q-1} \biggr) \ge q^{m+2}.
\]
As the left-hand side is increasing in $k \ge1$, it suffices to prove
the claim for $k=1$. In this case, the claim may be rewritten as
\[
3q^{m+2} + \frac{4}{(q-1)^2} \ge\frac{4}{q-1} \bigl(q +
q^{m+1} \bigr).
\]
As $q \ge2$, the result is a consequence of the two easily verified
inequalities
\[
2q^{m+2} \ge\frac{4q^{m+1}}{q-1} \quad\mbox{and} \quad q^{m+2} +
\frac
{4}{(q-1)^2} \ge\frac{4q}{q-1}.
\]

Hence, for $k \ge1$, using $q \ge2$, we obtain
\[
\bigl|f_A(k+1)\bigr| \le\frac{4}{q^{m+3}}\sum_{l=1}^\infty
\frac{1}{q^{l^2}} \le\frac
{4}{q^{m+3}} \Biggl( \frac{1}{2} + \sum
_{l=2}^\infty\frac
{1}{2^{2+l}} \Biggr) \le
\frac{1}{q^{m+2}}+\frac{1}{q^{m+3}},
\]
where the final inequality used that $2/q^{m+3} \leq1/q^{m+2}$, and
that $\sum_{l=2}^{\infty} \frac{1}{2^{2+l}} \leq1/4$, thus
completing the proof of the lemma.
\end{pf}

We now present the proof of Theorem~\ref{main}.

\begin{pf*}{Proof of Theorem~\ref{main}}
We first compute the lower bound on the total variation distance
by estimating the difference of the two distributions at $k=0$. In
particular, by (\ref{defTV}), (\ref{defQqnrect}) and (\ref{defQqrect}),
\begin{eqnarray*}
&& \|{\mathcal Q}_{q,n}-{\mathcal Q}_q\|_{\mathrm{TV}}
\\
&&\qquad  \geq \frac{1}{2} [ p_{0,n}-p_0]
\\
&&\qquad  =  \frac{1}{2} \biggl[ \prod_{m+1 \le i \le m+ n}
\bigl(1-1/q^i\bigr) - \prod_{i \geq m+1}
\bigl(1-1/q^i\bigr) \biggr]
\\
&&\qquad  \geq \frac{1}{2} \bigl[ \bigl(1-1/q^{m+1}\bigr) \cdots
\bigl(1-1/q^{n+m}\bigr) - \bigl(1-1/q^{m+1}\bigr) \cdots
\bigl(1-1/q^{n+m+1}\bigr) \bigr]
\\
&&\qquad  =  \frac{1}{2q^{n+m+1}} \bigl(1-1/q^{m+1}\bigr) \cdots
\bigl(1-1/q^{n+m}\bigr)
\\
&&\qquad \ge \frac{1}{2q^{n+m+1}} (1-1/q) \cdots\bigl(1-1/q^n\bigr)
\\
&&\qquad \geq \frac{1}{2q^{n+m+1}} \bigl(1-1/q-1/q^2\bigr)
\\
&&\qquad \geq \frac{1}{8q^{n+m+1}}.
\end{eqnarray*}
The fourth inequality used Lemma~\ref{bound}, and the last that $q \ge2$.

For the upper bound, with $h_A(k)={\mathbf1}(k \in A)$ we obtain
\begin{eqnarray*}
&& \bigl|P(Q_n \in A)-P(Q \in A)\bigr|
\\
&&\qquad  = \bigl|E\bigl[h_A(Q_n) \bigr]-{\mathcal Q}_qh_A\bigr|
\\
&&\qquad = \bigl|E\bigl[qf_A(Q_n+1) - \bigl(q^{Q_n}-1\bigr)
\bigl(q^{Q_n+m}-1\bigr) f_A(Q_n)\bigr]\bigr|
\\
&&\qquad =\bigl|E\bigl[q^{-n+Q_n+1}f_A(Q_n+1)\bigr]\bigr|
\le\|f_A\| Eq^{-n+Q_n+1},
\end{eqnarray*}
where we have applied (\ref{Qnqchar}) in the third equality. Applying
Lemmas~\ref{fbound} and~\ref{qtoX} gives that for $m \geq1$,
\begin{eqnarray*}
\|f_A\| Eq^{-n+Q_n+1} &\leq&\frac{2}{q^{m+2}} q^{-n+1}
\biggl(1+q^{-m}-\frac{1}{q^{n+m}} \biggr)
\\
&\le&\frac{2 (1+1/q )}{q^{n+m+1}}\le\frac{3}{q^{n+m+1}}.
\end{eqnarray*}
For $m=0$, applying Lemmas~\ref{fbound} and~\ref{qtoX} gives that
\begin{eqnarray*}
\|f_A\| Eq^{-n+Q_n+1} &\leq&\biggl( \frac{1}{q^2} +
\frac{1}{q^3} \biggr) q^{-n+1} \biggl(2-\frac{1}{q^{n}} \biggr)
\\
&\le& \frac{2 (1+1/q)}{q^{n+1}}\le\frac{3}{q^{n+1}}.
\end{eqnarray*}
Now taking the supremum over all $A \subset\mathbb{N}_0$ and applying
definition (\ref{defTV}) completes the proof.
\end{pf*}

\begin{remark*}
When $m=0$, the limit distribution ${\mathcal Q}_q$ also
arises in the study of the dimension of the fixed space of a random
element of $\operatorname{GL}(n,q)$. More precisely, Rudvalis and Shinoda \cite{RS}
prove that for $k$ fixed, as $n \rightarrow\infty$ the probability
that a random element of $\operatorname{GL}(n,q)$ has a $k$ dimensional fixed space
tends to $p_k$. See \cite{F3} for another proof.
\end{remark*}

\section{Symmetric matrices over finite fields} \label{symmetric}
Let $S$ be the set of symmetric matrices with entries in the finite
field $\mathbb{F}_q$ (where $q$ is
a prime power).\vspace*{-1pt} Clearly, $|S|=q^{{n+1 \choose2}}$. The paper \cite{C}
determines the rank distribution
of a matrix chosen uniformly from $S$ when $q$ is odd, and the paper
\cite{M} determines this distribution for $q$ both odd and even, given
by (\ref{pknzddef}).

Throughout this section $q \ge2$, and we let ${\mathcal Q}_q$ be the
distribution on $\mathbb{N}_0$ with mass function
%
\begin{equation}
\label{pkzddef} p_k = \frac{\prod_{i \geq1,\;i\; \mathrm{odd}} (1-1/q^i)}{
\prod_{i=1}^k
(q^i-1)}
\end{equation}
and for $n \in\mathbb{N}_0$ we let ${\mathcal Q}_{q,n}$ be the
distribution on $U_n=\{0,\ldots,n\}$ with mass function
%
\begin{eqnarray}\label{pknzddef}
p_{k,n} &=& \frac{N(n,n-k)}{q^{{n+1 \choose2}}}
\nonumber\\[16pt]\\[-30pt]
\eqntext{\displaystyle\mbox{where } N(n,2h) = \prod_{i=1}^h
\frac{q^{2i}}{(q^{2i}-1)} \prod_{i=0}^{2h-1}
\bigl(q^{n-i}-1\bigr)\mbox{ for $2h \le n$ and\hspace*{15pt}}}
\\
\eqntext{\displaystyle N(n,2h+1) = \prod_{i=1}^h
\frac{q^{2i}}{(q^{2i}-1)} \prod_{i=0}^{2h}
\bigl(q^{n-i}-1\bigr)\mbox{ for $2h+1\le n$.}}
\end{eqnarray}

\begin{theorem} \label{symmetricnonzerodiagthm}
If $n$ is even, we have
\[
\frac{0.18}{q^{n+1}} \leq\|{\mathcal Q}_{q,n}-{\mathcal
Q}_q\|_{\mathrm{TV}} \leq\frac{2.25}{q^{n+1}}.
\]
If $n$ is odd, we have
\[
\frac{0.18}{q^{n+2}} \leq\|{\mathcal Q}_{q,n}-{\mathcal
Q}_q\|_{\mathrm{TV}} \leq\frac{2}{q^{n+2}}.
\]
\end{theorem}

We again begin by using Lemma~\ref{abcharlem} to develop
characterizations for the two distributions of interest.
For $n \in\mathbb{N}_0$ we let ${\mathbf1}_n={\mathbf1}(\mbox{$n$ is
even})$, the indicator function that $n$ is even.

\begin{lemma} 
If $Q$ has the ${\mathcal Q}_q$ distribution then
\[
E\bigl[f(Q+1)\bigr]=E\bigl[\bigl(q^Q-1\bigr)f(Q)\bigr]
\]
for all functions $f$ for which these expectations exist.

If $Q_n$ has the ${\mathcal Q}_{q,n}$ distribution then
%
\begin{equation}
\label{steineqQnsymmetricnzd} E\bigl[\bigl(1-{\mathbf1}_{n-Q_n}q^{-(n-Q_n)}
\bigr)f(Q_n+1)\bigr]=E\bigl[\bigl(q^{Q_n}-1
\bigr)f(Q_n)\bigr]
\end{equation}
for all functions $f$ for which these expectations exist.
\end{lemma}

\begin{pf} By taking ratios in (\ref{pkzddef}), we obtain
\[
\frac{p_{k-1}}{p_k}=q^k-1.
\]
Setting $a(k)=1$ and $b(k)=q^k-1$ applying Lemma~\ref{abcharlem}
yields the first result.

If $n$ and $k$ are of the same parity, then $n-k=2h$ for some $h$, and
we have
\begin{eqnarray*}
\frac{p_{k-1,n}}{p_{k,n}} &=& \frac{N(n,n-k+1)}{N(n,n-k)}= \frac
{N(n,2h+1)}{N(n,2h)}
\\
&=& q^{n-2h}-1
= q^k-1, \qquad\mbox{$k \in U_n$.}
\end{eqnarray*}
In this case, we set $a(k)=1$ and $b(k)=q^k-1$.

If $k$ and $n$ are of opposite parity, then $n-k=2h+1$ for some $h$ and
we obtain
\begin{eqnarray*}
\frac{p_{k-1,n}}{p_{k,n}} &=& \frac{N(n,n-k+1)}{N(n,n-k)} = \frac
{N(n,2(h+1))}{N(n,2h+1)}
\\
&=&\frac{q^{2(h+1)}}{q^{2(h+1)}-1}\bigl(q^{n-2h-1}-1\bigr)
\\
&=& \frac
{q^{n-k+1}}{q^{n-k+1}-1}\bigl(q^k-1\bigr) =\frac{q^k-1}{1-q^{-n+k-1}}
\qquad\mbox{for $k \in U_n$.}
\end{eqnarray*}
In this case, we set $a(k)=1-q^{-n+k-1}$ and $b(k)=q^k-1$.

Writing $a(k)=1-{\mathbf1}_{n-k+1} q^{-n+k-1}$ and $b(k)=q^k-1$ combines
both cases. Noting that $a(n+1)=0$ an application of Lemma~\ref{abcharlem} completes the proof.
\end{pf}

\begin{lemma} \label{mom-symmetricnzd-even}
If $Q_n$ has distribution ${\mathcal Q}_{q,n}$ then
\[
E{\mathbf1}_{n-Q_n}q^{Q_n} = 1.
\]
\end{lemma}

\begin{pf} Setting $f(x)={\mathbf1}_{n-x}$ in (\ref
{steineqQnsymmetricnzd}) yields
\[
E\bigl[\bigl(1-{\mathbf1}_{n-Q_n}q^{-(n-Q_n)}\bigr){
\mathbf1}_{n-Q_n-1}\bigr]=E\bigl[\bigl(q^{Q_n}-1\bigr){\mathbf
1}_{n-Q_n}\bigr].
\]
Since ${\mathbf1}_{n-Q_n}{\mathbf1}_{n-Q_n-1}=0$, we obtain
\[
E[{\mathbf1}_{n-Q_n-1}]=E\bigl[\bigl(q^{Q_n}-1\bigr){
\mathbf1}_{n-Q_n}\bigr]
\]
and rearranging yields
\[
E\bigl[{\mathbf1}_{n-Q_n}q^{Q_n}\bigr]=E[{\mathbf1}_{n-Q_n-1}]+E[{
\mathbf1}_{n-Q_n}]=1
\]
as claimed.
\end{pf}

In the remainder of this section, we consider the Stein equation (\ref
{receq}) for the target distribution ${\mathcal Q}_q$ with
\[
a(k)=1 \quad\mbox{and} \quad b(k)=q^k-1
\]
and for $A \subset\mathbb{N}_0$ we let $f_A$ denote the solution
(\ref{steinsol}) when $h(k)={\mathbf1}(k \in A)$.

\begin{lemma} \label{solnzdsymmetric}
The solution $f_A$ satisfies
\[
\sup_{A \subset\mathbb{N}_0}\bigl|f_A(1)\bigr| \le\frac{1}{q}+
\frac{1}{q^3} \quad\mbox{and} \quad\sup_{A \subset\mathbb{N}_0, k \ge2}\bigl|f_A(k)\bigr|
\le\frac{2}{q^2}.
\]
\end{lemma}

\begin{pf} By Lemma~\ref{fA1boundlem}, for all $A \subset\mathbb{N}_0$,
\begin{eqnarray*}
\bigl|f_A(1)\bigr| & \le& P(Q \ge1)
\\
& = & 1-p_0
\\
& = & 1 -\prod_{i \ge1, \;i\; \mathrm{odd}}\biggl(1-\frac{1}{q^i}
\biggr)
\\
& \le& \frac{1}{q}+\frac{1}{q^3},
\end{eqnarray*}
where we applied the third inequality in Lemma~\ref{bound}.

For $k \ge1$, using (\ref{prodUUcbound}) and (\ref{pkzddef}),
\begin{eqnarray*}
\bigl|f_A(k+1)\bigr| &\le& \frac{P(Q \in U_k^c)}{p_k}
\\
&=& \prod_{i=1}^k
\bigl(q^i-1\bigr) \sum_{l=k+1}^\infty
\frac{1}{\prod_{i=1}^l(q^i-1)}
\\
&=& \sum_{l=k+1}^\infty\frac{1}{\prod_{i=k+1}^l(q^i-1)}
\\
&=& \sum_{l=k+1}^\infty\frac{1}{q^{(l(l+1)-k(k+1))/2}\prod_{i=k+1}^l(1-q^{-i})}
\\
&\le& \frac{1}{\prod_{i=k+1}^\infty(1-q^{-i})} \sum_{l=k+1}^\infty
\frac{1}{q^{(l(l+1)-k(k+1))/2}}
\\
&=&\frac{1}{\prod_{i=k+1}^\infty(1-q^{-i})} \Biggl( \frac
{1}{q^{k+1}} + \sum
_{l=2}^\infty\frac{1}{q^{lk+l^2/2+l/2}} \Biggr)
\\
&\le& \frac{1}{\prod_{i=k+1}^\infty(1-q^{-i})} \Biggl( \frac
{1}{q^{k+1}} + \frac{1}{q^{2k}}\sum
_{l=2}^\infty\frac
{1}{q^{l(l+1)/2}} \Biggr).
\end{eqnarray*}
In particular, for all $k \ge1$ we obtain
\[
\bigl|f_A(k+1)\bigr| \le\frac{1}{q^2\prod_{i=2}^\infty(1-q^{-i})} \Biggl(1+ \sum
_{l=2}^\infty\frac{1}{q^{l(l+1)/2}} \Biggr)
\]
and the proof is now completed by using the fact that for all $q \ge2$
\[
\frac{1}{\prod_{i=2}^\infty(1-q^{-i})} \Biggl(1+ \sum_{l=2}^\infty
\frac{1}{q^{l(l+1)/2}} \Biggr) \le(1.732) (1.142) \leq2.
\]
The upper bound on the first factor used the second assertion of Lemma
\ref{bound}. Indeed,
\[
\prod_{i \geq2} \bigl(1-q^{-i}\bigr) \geq
\frac
{1-1/q-1/q^2+1/q^5+1/q^7-1/q^{12}-1/q^{15}}{1-1/q}.
\]
The upper bound on the second factor used that
\begin{eqnarray*}
1+ \sum_{l=2}^\infty\frac{1}{q^{l(l+1)/2}}
&\le& 1 + 1/2^3 + 1/2^6 + 1/2^{10} + \sum
_{l=5}^\infty\frac{1}{2^{10+l}}
\\
&=& 1 + 1/2^3 + 1/2^6 + 1/2^{10}
+2/2^{15}\leq1.142.
\end{eqnarray*}\upqed
\end{pf}

We now present the proof of Theorem~\ref{symmetricnonzerodiagthm}.

\begin{pf*}{Proof of Theorem~\ref{symmetricnonzerodiagthm}}
For the lower bound, one computes from the formula for
$p_{0,n}$ in (\ref{pknzddef}), in the case $n=2m$ is even, that
\begin{eqnarray*}
p_{0,n} &=& \frac{N(n,n)}{q^{{n+1 \choose2}}} = (1-1/q) \bigl(1-1/q^2
\bigr) \cdots\bigl(1-1/q^n\bigr) \prod_{i=1}^m
\frac{1}{1-q^{-2i}}
\\
&=& (1-1/q) \bigl(1-1/q^3\bigr)\cdots\bigl(1-1/q^{n-1}
\bigr).
\end{eqnarray*}
Thus, the total variation distance between ${\mathcal Q}_{q,n}$ and
${\mathcal Q}_q$ is at least
\begin{eqnarray*}
& & \frac{1}{2} [p_{0,n}-p_0]
\nonumber
\\
&&\qquad \geq \frac{1}{2} \biggl[ \biggl(1-\frac{1}{q}\biggr) \biggl(1-
\frac{1}{q^3}\biggr) \cdots\biggl(1-\frac{1}{q^{n-1}}\biggr)
\\
&&\hspace*{44.5pt}{} - \biggl(1-
\frac{1}{q}\biggr) \biggl(1-\frac{1}{q^3}\biggr) \cdots\biggl(1-
\frac{1}{q^{n+1}}\biggr) \biggr]
\nonumber
\\
&&\qquad  =  \frac{1}{2 q^{n+1}} (1-1/q) \bigl(1-1/q^3\bigr) \cdots
\bigl(1-1/q^{n-1}\bigr)
\nonumber
\\
&&\qquad \geq \frac{1}{2q^{n+1}} \bigl(1-1/q-1/q^{3}\bigr)
\nonumber
\\
&&\qquad  \geq \frac{0.18}{q^{n+1}}.
\nonumber
\end{eqnarray*}
The second inequality used Lemma~\ref{bound}, and the final inequality
that $q \ge2$.

When $n=2m+1$ is odd, we obtain similarly that
\begin{eqnarray*}
& & \frac{1}{2} [p_{0,n}-p_0]
\nonumber
\\
&&\qquad \geq \frac{1}{2} \biggl[ \biggl(1-\frac{1}{q}\biggr) \biggl(1-
\frac{1}{q^3}\biggr) \cdots\biggl(1-\frac{1}{q^n}\biggr)
\\
&&\hspace*{44.5pt}{} - \biggl(1-
\frac{1}{q}\biggr) \biggl(1-\frac{1}{q^3}\biggr) \cdots\biggl(1-
\frac
{1}{q^{n+2}}\biggr) \biggr]
\nonumber
\\
&&\qquad  =  \frac{1}{2 q^{n+2}} (1-1/q) \bigl(1-1/q^3\bigr) \cdots
\bigl(1-1/q^n\bigr)
\nonumber
\\
&&\qquad  \geq \frac{1}{2q^{n+2}} \bigl(1-1/q-1/q^{3}\bigr)
\nonumber
\\
&&\qquad  \geq \frac{0.18}{q^{n+2}}.
\nonumber
\end{eqnarray*}

To prove the upper bound, for any $A \subset\mathbb{N}_0$ we have
\begin{eqnarray*}
&& \bigl|P(Q_n \in A)-P(Q \in A)\bigr|
\\
&&\qquad =  \bigl|E\bigl[h_A(Q_n)\bigr]-{\mathcal
Q}_qh_A\bigr|
\\
&&\qquad  =  \bigl|E\bigl[f_A(Q_n+1) - \bigl(q^{Q_n}-1
\bigr)f_A(Q_n)\bigr]\bigr|
\\
&&\qquad  =  \bigl|E\bigl[{\mathbf1}_{n-Q_n}q^{-(n-Q_n)}f_A(Q_n+1)
\bigr]\bigr|
\\
&&\qquad  \le {\mathbf1}_n q^{-n}\bigl|f_A(1)\bigr|P(Q_n=0)
\\
&&\quad\qquad{} + \bigl|E\bigl[{\mathbf1}_{n-Q_n}q^{-(n-Q_n)}f_A(Q_n+1){
\mathbf1}(Q_n \ge1)\bigr]\bigr|
\\
&&\qquad \le {\mathbf1}_n q^{-n}\bigl|f_A(1)\bigr|+ E\bigl[{
\mathbf1}_{n-Q_n} q^{-(n-Q_n)}{\mathbf1}(Q_n \ge1)\bigr]\sup
_{k \ge2}\bigl|f_A(k)\bigr|
\\
&&\qquad \le {\mathbf1}_n q^{-n}\bigl|f_A(1)\bigr|+E\bigl[{
\mathbf1}_{n-Q_n} q^{-(n-Q_n)}\bigr]\sup_{k
\ge2}\bigl|f_A(k)\bigr|
\\
&&\qquad = {\mathbf1}_n q^{-n}\bigl|f_A(1)\bigr|+
q^{-n}\sup_{k \ge2}\bigl|f_A(k)\bigr|
\\
&&\qquad  \le {\mathbf1}_n q^{-n} \biggl( \frac{1}{q} +
\frac{1}{q^3} \biggr)+ q^{-n} \biggl( \frac{2}{q^2} \biggr)
\end{eqnarray*}
and the result easily follows. The last two steps used Lemmas~\ref{mom-symmetricnzd-even} and~\ref{solnzdsymmetric}, respectively.
\end{pf*}

\section{Symmetric matrices over finite fields with zero diagonal}\label{0diag}

This section treats the rank distribution (\ref{Nn,rdef}), (\ref
{pkn-odd5def}) of a random symmetric matrix with zero diagonal over a
finite field $\mathbb{F}_q$, when $q$ is a power of 2. Such matrices
were termed ``symplectic'' in \cite{MS}, which studied their rank
distribution in the context of coding theory. We remark that by \cite
{C2} and elementary manipulations, the quantity $N(n,2h)$ defined in
(\ref{Nn,rdef}) below is also equal to the number of $n \times n$
skew-symmetric matrices of rank $2h$ (where now $q$ is odd), so our
results also apply in that context. We also mention that the two
limiting distributions studied in this section arise in the work of the
number theorist Swinnerton--Dyer on 2-Selmer groups \cite{Sw}. We
consider the cases where $n$ is even and odd separately.

\subsection{Case of $n$ even} 

Throughout this subsection, let $n=2m$, an even, nonnegative integer,
and with $q \ge2$, let ${\mathcal Q}_q$ be the distribution on
$\mathbb{N}_0$ with mass function
%
\begin{equation}
\label{pkdef} p_k = \prod_{i \ge1, \;i\; \mathrm{odd}}
\bigl(1-1/q^i\bigr) \frac{q^{2k}}{\prod_{i=1}^{2k}(q^i-1)}.
\end{equation}
For $n \in\mathbb{N}_0$, let ${\mathcal Q}_{q,n}$ be the distribution
on $U_m=\{0,\ldots,m\}$ with mass function
%
\begin{equation}
\label{Nn,rdef} \qquad p_{k,n}=\frac{N(n,n-2k)}{q^{n \choose2}} \qquad\mbox{where }
N(n,2h) = \prod_{i=1}^h
\frac{q^{2i-2}}{q^{2i}-1} \prod_{i=0}^{2h-1}
\bigl(q^{n-i}-1\bigr).
\end{equation}

\begin{theorem} \label{symmetric-eventhm}
We have that
\[
\frac{0.18}{q^{n+1}} \leq\|{\mathcal Q}_{q,n}-{\mathcal
Q}_q\|_{\mathrm{TV}} \leq\frac{1.5}{q^{n+1}}.
\]
\end{theorem}

We begin the proof of Theorem~\ref{symmetric-eventhm} by developing
characterizations of the two distributions of interest.

%
\begin{lemma} 
If $Q$ has the ${\mathcal Q}_q$ distribution, then
\[
E\bigl[q^2f(Q+1)\bigr]=E\bigl[\bigl(q^{2Q-1}-1\bigr)
\bigl(q^{2Q}-1\bigr)f(Q)\bigr]
\]
for all functions $f$ for which these expectations exist.

If $Q_n$ has the ${\mathcal Q}_{q,n}$ distribution then
%
\begin{eqnarray}\label{steineqQnsymmetric}
&& E\bigl[\bigl(q^2-q^{-2(m-Q_n-1)}
\bigr)f(Q_n+1)\bigr]
\nonumber\\[-8pt]\\[-8pt]
&&\qquad =E\bigl[\bigl(q^{2Q_n-1}-1\bigr)
\bigl(q^{2Q_n}-1\bigr)f(Q_n)\bigr]\nonumber
\end{eqnarray}
for all functions $f$ for which these expectations exist.
\end{lemma}

\begin{pf} By taking ratios in (\ref{pkdef}), we obtain that for $k
\in
\mathbb{N}_0$
\[
\frac{p_{k-1}}{p_k}=\frac{(q^{2k-1}-1)(q^{2k}-1)}{q^2}.
\]
Setting $a(k)=q^2$ and $b(k)=(q^{2k-1}-1)(q^{2k}-1)$, applying Lemma
\ref{abcharlem} yields the first result.

Similarly, the second claim can be shown using Lemma~\ref{abcharlem}
and (\ref{Nn,rdef}) to yield
\[
\frac{p_{k-1,n}}{p_{k,n}}= \frac{N(2m,2(m-k+1))}{N(2m,2(m-k))}= \frac
{(q^{2k-1}-1)(q^{2k}-1)}{q^2-q^{-2(m-k)}},
\]
upon setting $a(k)=q^2-q^{-2(m-k)}$ and $b(k)=(q^{2k-1}-1)(q^{2k}-1)$,
noting that $a(m+1)=0$.
\end{pf}

\begin{lemma} \label{mom}
If $Q_n$ has distribution ${\mathcal Q}_{q,n}$, then
\[
Eq^{2Q_n}=q+1-q^{-n+1}.
\]
\end{lemma}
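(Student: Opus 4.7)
The plan is to mimic the strategy used in the proof of Lemma~\ref{qtoX}: apply the characterization (\ref{stein:eq:Qn:symmetric}) with the parametric test function $f(x)=q^{kx}$ to generate a recursion in $k$ for the exponential moments $c_k:=E[q^{kQ_n}]$, and then choose $k$ so that the unknown moments drop out and $c_2$ can be read off.

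Substituting $f(x)=q^{kx}$ into (\ref{stein:eq:Qn:symmetric}), and using $n=2m$ together with the identity $q^{-2(m-Q_n-1)}=q^{-n+2Q_n+2}$, the left-hand side becomes $q^{k+2}c_k-q^{k-n+2}c_{k+2}$. Expanding $(q^{2Q_n-1}-1)(q^{2Q_n}-1)$ turns the right-hand side into $q^{-1}c_{k+4}-(1+q^{-1})c_{k+2}+c_k$. Equating the two sides and multiplying through by $q$ yields the recursion
\[ c_{k+4}=(q^{k+3}-q)\,c_k+\bigl(1+q-q^{k-n+3}\bigr)\,c_{k+2}. \]

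The key observation is that at $k=-2$ the coefficient $q^{k+3}-q=q-q$ of $c_{-2}$ vanishes, so the inaccessible quantity $c_{-2}$ drops out entirely. Combined with the normalization $c_0=1$, the recursion then collapses to $c_2=1+q-q^{-n+1}$, which is precisely the claim. There is essentially no obstacle once the recursion is derived; the only subtlety is recognizing that $k=-2$ is the value at which the $c_{-2}$ coefficient collapses, which is the direct analogue of the choice $k=-1$ employed in the proof of Lemma~\ref{qtoX}.
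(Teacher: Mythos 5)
Your proof is correct and follows the same method as the paper: substitute $f(x)=q^{kx}$ into (\ref{stein:eq:Qn:symmetric}) to obtain a three-term recursion in $c_k=E[q^{kQ_n}]$, then set $k=-2$ so that the coefficient of $c_{-2}$ vanishes and $c_2$ is determined by $c_0=1$. The paper arranges the recursion slightly differently but the argument is identical.
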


\begin{pf} For $k$ any integer, letting $f(x)=q^{kx}$ in (\ref
{steineqQnsymmetric}) yields
\[
E\bigl[\bigl(q^2-q^{-2(m-Q_n-1)}\bigr)q^{k(Q_n+1)}\bigr]=E
\bigl[\bigl(q^{2Q_n-1}-1\bigr) \bigl(q^{2Q_n}-1\bigr)q^{kQ_n}
\bigr].
\]
Setting $c_k=Eq^{kQ_n}$, this identity yields
\[
q^{-1}c_{k+4}-\bigl(1+q^{-1}- q^{-2m+2+k}
\bigr)c_{k+2}+\bigl(1-q^{k+2}\bigr)c_k = 0.
\]
Substituting $k=-2$ and using that $c_0=1$ we obtain
\[
q^{-1}c_2-\bigl(1+q^{-1}- q^{-2m}\bigr)
= 0,
\]
so that
\[
c_2=q\bigl(1+q^{-1}- q^{-2m}\bigr)=q+1-q^{-2m+1}.
\]\upqed
\end{pf}

In the remainder of this subsection we consider the Stein equation
(\ref{receq}) for the target distribution ${\mathcal Q}_q$ with
\[
a(k)=q^2 \quad\mbox{and} \quad b(k)=\bigl(q^{2k-1}-1
\bigr) \bigl(q^{2k}-1\bigr)
\]
and for $A \subset\mathbb{N}_0$ we let $f_A$ denote the solution
(\ref{steinsol}) when $h(k)={\mathbf1}(k \in A)$.

\begin{lemma} \label{solbnds-evensymmetric}
The function $f_A$ satisfies
\[
\sup_{A \subset\mathbb{N}_0}\bigl|f_A(1)\bigr| \le\frac{1}{q^3}+
\frac{1}{q^5} \quad\mbox{and} \quad\sup_{A \subset\mathbb{N}_0, k \ge2}\bigl|f_A(k)\bigr|
\le\frac{1.31}{q^7}.
\]
\end{lemma}

\begin{pf} By Lemma~\ref{fA1boundlem}, for all $A \subset\mathbb{N}_0$,
\begin{eqnarray*}
\bigl|f_A(1)\bigr| & \le& \frac{P(Q \ge1)}{q^2}
\\
& = & \frac{1-p_0}{q^2}
\\
& = & \frac{1}{q^2} \biggl( 1 -\prod_{i \ge1, \;i\; \mathrm{odd}}
\biggl(1-\frac{1}{q^i}\biggr) \biggr)
\\
& \le& \frac{1}{q^2} \biggl(\frac{1}{q}+\frac{1}{q^3} \biggr)
\\
& = & \frac{1}{q^3}+\frac{1}{q^5},
\end{eqnarray*}
where the second inequality used Lemma~\ref{bound}.

For $k \ge1$, by (\ref{prodUUcbound}) and (\ref{pkdef}),
\begin{eqnarray*}
\bigl|f_A(k+1)\bigr| &\le& \frac{P(Q \in U_k^c)}{q^2 p_k}
\\
&=& \frac{\prod_{i=1}^{2k}(q^i-1)}{q^{2k+2}} \sum_{l=k+1}^\infty
\frac{q^{2l}}{\prod_{i=1}^{2l}(q^i-1)}
\\
&=& \frac{1}{q^2} \sum_{l=k+1}^\infty
\frac{q^{2(l-k)}}{\prod_{i=2k+1}^{2l}(q^i-1)}
\\
&=& \frac{1}{q^2} \sum_{l=k+1}^\infty
\frac
{q^{l-k}}{q^{2(l^2-k^2)}\prod_{i=2k+1}^{2l}(1-q^{-i})}
\\
&\le& \frac{1}{q^2\prod_{i=2k+1}^\infty(1-q^{-i})} \sum_{l=k+1}^\infty
\frac{q^{l-k}}{q^{2(l^2-k^2)}}
\\
&=& \frac{1}{q^2 \prod_{i=2k+1}^\infty(1-q^{-i})} \Biggl( \frac
{1}{q^{4k+1}} + \sum
_{l=2}^\infty\frac{1}{q^{2l^2+4lk-l}} \Biggr)
\\
&\le& \frac{1}{q^2 \prod_{i=2k+1}^\infty(1-q^{-i})} \Biggl( \frac
{1}{q^{4k+1}} + \frac{1}{q^{8k}}\sum
_{l=2}^\infty\frac
{1}{q^{2l^2-l}} \Biggr).
\end{eqnarray*}

Hence, for all $k \ge1$ we obtain
\[
\bigl|f_A(k+1)\bigr| \le\frac{1}{q^7\prod_{i=3}^\infty(1-q^{-i})} \Biggl(1+ \frac
{1}{q^3}\sum
_{l=2}^\infty\frac{1}{q^{2l^2-l}} \Biggr)
\]
and the proof is now completed using the fact that for all $q \ge2$
\[
\frac{1}{\prod_{i=3}^\infty(1-q^{-i})} \Biggl(1+ \frac{1}{q^3}\sum
_{l=2}^\infty\frac{1}{q^{2l^2-l}} \Biggr) \le(1.29854)
(1.002) \leq1.31.
\]
The upper bound on the first factor used part 2 of Lemma~\ref{bound}.
The upper bound on the second factor used that
\begin{eqnarray*}
1+ \frac{1}{q^3}\sum_{l=2}^\infty
\frac{1}{q^{2l^2-l}} &\le& 1+\frac
{1}{2^3} \Biggl( \frac{1}{2^6} + \sum
_{l=3}^\infty\frac
{1}{2^{12+l}} \Biggr)
\\
&=&  1+\frac{1}{8} \biggl( \frac{1}{2^6} + \frac
{2}{2^{15}} \biggr)
\leq1.002.
\end{eqnarray*}\upqed
\end{pf}

We now present the proof of Theorem~\ref{symmetric-eventhm}.

\begin{pf*}{Proof of Theorem~\ref{symmetric-eventhm}}
From the formula for $p_{0,n}$, one has that
\[
p_{0,n} = \frac{N(n,n)}{q^{{n \choose2}}} = (1-1/q) \bigl(1-1/q^3
\bigr) \cdots\bigl(1-1/q^{n-1}\bigr).
\]
The argument in the proof of Theorem~\ref{symmetricnonzerodiagthm}
now shows that total variation distance between ${\mathcal Q}_{q,n}$
and ${\mathcal Q}_q$ is at least $0.18/q^{n+1}$.

For the upper bound, arguing as in the proof of Theorem~\ref{symmetricnonzerodiagthm} we obtain
\begin{eqnarray*}
&& \bigl|P(Q_n \in A)-P(Q \in A)\bigr|
\\
&&\qquad  =  \bigl|E\bigl[h_A(Q_n)\bigr]-{\mathcal
Q}_qh_A\bigr|
\\
&&\qquad  =  \bigl|E\bigl[q^2f_A(Q_n+1) -
\bigl(q^{2Q_n-1}-1\bigr) \bigl(q^{2Q_n}-1\bigr) f_A(Q_n)
\bigr]\bigr|
\\
&&\qquad  =  \bigl|E\bigl[q^{-2(m-Q_n-1)}f_A(Q_n+1)\bigr]\bigr|
\\
&&\qquad \le \bigl|q^{-2(m-1)}f_A(1)\bigr|P(Q_n=0)
\\
&&\qquad\quad{} +\bigl|E\bigl[q^{-2(m-Q_n-1)}f_A(Q_n+1){
\mathbf1}(Q_n \ge1)\bigr]\bigr|
\\
&&\qquad \le q^{-2(m-1)}\bigl|f_A(1)\bigr|+E\bigl[q^{-2(m-Q_n-1)}{
\mathbf1}(Q_n \ge1)\bigr]\sup_{k
\ge2}\bigl|f_A(k)\bigr|
\\
&&\qquad \le q^{-2(m-1)}\bigl|f_A(1)\bigr|+E\bigl[q^{-2(m-Q_n-1)}\bigr]\sup
_{k \ge2}\bigl|f_A(k)\bigr|
\\
&&\qquad = q^{-2(m-1)}\bigl|f_A(1)\bigr|+ q^{-2(m-1)}
\bigl(q+1-q^{-2m+1}\bigr)\sup_{k \ge
2}\bigl|f_A(k)\bigr|
\\
&&\qquad \le q^{-2(m-1)}\bigl|f_A(1)\bigr|+ q^{-2(m-1)}(q+1)\sup
_{k \ge2}\bigl|f_A(k)\bigr|
\\
&&\qquad \le q^{-n+2} \biggl(\frac{1}{q^3}+\frac{1}{q^5} \biggr)+
1.31\bigl(q^{-n+3}+q^{-n+2}\bigr)\frac{1}{q^7}
\\
&&\qquad = q^{-(n+1)}+q^{-(n+3)}+1.31q^{-(n+4)}+
1.31q^{-(n+5)}
\\
&&\qquad \le 1.5 q^{-(n+1)}
\end{eqnarray*}
as claimed. Note that Lemma~\ref{mom} was used in the fourth equality,
and Lemma~\ref{solbnds-evensymmetric} in the second to last inequality.
\end{pf*}

\subsection{Case of $n$ odd} 

Throughout this subsection, let $n=2m+1$, a positive, odd integer, and
with $q \ge2$, let ${\mathcal Q}_q$ be the distribution on $\mathbb
{N}_0$ with mass function
%
\begin{equation}
\label{pk-odddef} p_k = \prod_{i \ge1, \;i\; \mathrm{odd}}
\bigl(1-1/q^i\bigr) \frac{q^{2k+1}}{\prod_{i=1}^{2k+1}(q^i-1)}.
\end{equation}
For $n \in\mathbb{N}_0$ let ${\mathcal Q}_{q,n}$ be the distribution
on $\{0,\ldots,m\}$ with mass function
%
\begin{equation}
\label{pkn-odd5def} p_{k,n}=\frac{N(n,n-1-2k)}{q^{n \choose2}},
\end{equation}
where $N(n,2h)$ is given in (\ref{Nn,rdef}).

Our main result is the following theorem.

\begin{theorem} \label{symmetric-oddthm}
We have that
\[
\frac{0.37}{q^{n+2}} \leq\|{\mathcal Q}_{q,n}-{\mathcal
Q}_q\|_{\mathrm{TV}} \leq\frac{2.2}{q^{n+2}}.
\]
\end{theorem}

We again begin by developing characterizing equations for the
distributions under study.

%
\begin{lemma} 
If $Q$ has the ${\mathcal Q}_q$ distribution, then
\[
E\bigl[q^2f(Q+1)\bigr]=E\bigl[\bigl(q^{2Q+1}-1\bigr)
\bigl(q^{2Q}-1\bigr)f(Q)\bigr]
\]
for all functions $f$ for which these expectations exist.

If $Q_n$ has the ${\mathcal Q}_{q,n}$ distribution then
%
\begin{eqnarray}\label{steineqQn-oddsymmetric}
&& E\bigl[\bigl(q^2-q^{-2(m-Q_n-1)}
\bigr)f(Q_n+1)\bigr]
\nonumber\\[-8pt]\\[-8pt]
&&\qquad =E\bigl[\bigl(q^{2Q_n+1}-1\bigr)
\bigl(q^{2Q_n}-1\bigr)f(Q_n)\bigr]\nonumber
\end{eqnarray}
for all functions $f$ for which these expectations exist.
\end{lemma}

\begin{pf} By taking ratios in (\ref{pk-odddef}), we obtain
\[
\frac{p_{k-1}}{p_k}=\frac{(q^{2k+1}-1)(q^{2k}-1)}{q^2}.
\]
Setting $a(k)=q^2$ and $b(k)=(q^{2k+1}-1)(q^{2k}-1)$, Lemma~\ref{abcharlem} yields the first claim. Similarly, the second can be
shown by applying (\ref{pkn-odd5def}) to yield
\[
\frac{p_{k-1,n}}{p_{k,n}}= \frac{N(n,2(m-k+1))}{N(n,2(m-k))}= \frac
{(q^{2k+1}-1)(q^{2k}-1)}{q^2-q^{-2(m-k)}}
\]
and then invoking Lemma~\ref{abcharlem} with $a(k)=q^2-q^{-2(m-k)}$
and $b(k)=(q^{2k+1}-1)(q^{2k}-1)$, noting $a(m+1)=0$.
\end{pf}

\begin{lemma} \label{mom-symmetric-even}
If $Q_n$ has distribution ${\mathcal Q}_{q,n}$ then
\[
Eq^{2Q_n}=1+q^{-1}-q^{-n}.
\]
\end{lemma}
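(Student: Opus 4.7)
The plan is to mimic the argument of Lemma \ref{mom} from the $n$ even case, which used the characterizing equation with the test function $f(x)=q^{kx}$ to derive a short recursion for the moments $c_k := Eq^{kQ_n}$, and then evaluated at a judiciously chosen $k$.

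Concretely, I will substitute $f(x)=q^{kx}$ into the characterization (\ref{stein:eq:Qn-odd:symmetric}). Expanding $q^{-2(m-Q_n-1)} = q^{-2m+2}q^{2Q_n}$ on the left and $(q^{2Q_n+1}-1)(q^{2Q_n}-1) = q^{4Q_n+1} - q^{2Q_n+1} - q^{2Q_n} + 1$ on the right and taking expectations gives
\beas
q^{k+2} c_k - q^{-2m+2+k} c_{k+2} = q\,c_{k+4} - (q+1) c_{k+2} + c_k,
\enas
which rearranges to the recursion
\beas
q\, c_{k+4} = \bigl(q+1 - q^{-2m+2+k}\bigr) c_{k+2} + (q^{k+2}-1) c_k.
\enas

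Now I would set $k=-2$. The coefficient of $c_{-2}$ becomes $q^0 - 1 = 0$, so the (undefined or irrelevant) value of $c_{-2}$ drops out, and using $c_0 = 1$ one obtains
\beas
q\, c_2 = q + 1 - q^{-2m},
\enas
giving $c_2 = 1 + q^{-1} - q^{-2m-1}$. Since $n = 2m+1$, this is exactly $Eq^{2Q_n} = 1 + q^{-1} - q^{-n}$, as claimed.

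The only conceptual point is picking the right $k$ so that $c_{-2}$ has zero coefficient; this is automatic since the $b$-function $b(k) = (q^{2k+1}-1)(q^{2k}-1)$ vanishes at $k=0$, which is precisely the condition $b(0)=0$ that Lemma \ref{ab.char:lem} guarantees when the target support is $\mathbb{N}_0$. I don't anticipate any real obstacle; the computation is a direct parallel of Lemma \ref{mom}, with the only bookkeeping difference being the extra factor $q^{2Q_n+1}-1$ (rather than $q^{2Q_n-1}-1$) in the $b$-function, which slightly shifts the resulting recursion but does not change its structure.
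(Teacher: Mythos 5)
Your proof is correct and follows essentially the same route as the paper: substitute $f(x)=q^{kx}$ into the characterization \eqref{stein:eq:Qn-odd:symmetric}, derive the recursion $q\,c_{k+4}=(q+1-q^{-2m+2+k})c_{k+2}+(q^{k+2}-1)c_k$, and set $k=-2$ so that the $c_{-2}$ term drops out and $c_0=1$ yields $c_2=1+q^{-1}-q^{-2m-1}=1+q^{-1}-q^{-n}$. Your closing remark tying the vanishing of the $c_{-2}$ coefficient to $b(0)=0$ is a nice conceptual gloss that the paper leaves implicit.
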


\begin{pf} For $k$ any integer, letting $f(x)=q^{kx}$ in (\ref
{steineqQn-oddsymmetric}) yields
\[
E\bigl[\bigl(q^2-q^{-2(m-Q_n-1)}\bigr)q^{k(Q_n+1)}\bigr]=E
\bigl[\bigl(q^{2Q_n+1}-1\bigr) \bigl(q^{2Q_n}-1
\bigr)q^{kQ_n}\bigr].
\]
Setting $c_k=E[q^{kQ_n}]$, this identity yields
\[
q c_{k+4}-\bigl(1+q- q^{-2m+2+k}\bigr)c_{k+2}+
\bigl(1-q^{k+2}\bigr)c_k = 0.
\]
Substituting $k=-2$ and using that $c_0=1$ we obtain
\[
q c_2-\bigl(1+q- q^{-2m}\bigr) = 0,
\]
so that
\[
c_2=q^{-1}\bigl(1+q- q^{-2m}\bigr)= 1+
q^{-1}-q^{-2m-1}.
\]\upqed
\end{pf}

In the remainder of this subsection, we consider the Stein equation
(\ref{receq}) for the target distribution ${\mathcal Q}_q$ with
\[
a(k)=q^2 \quad\mbox{and} \quad b(k)=\bigl(q^{2k+1}-1
\bigr) \bigl(q^{2k}-1\bigr)
\]
and for $A \subset\mathbb{N}_0$ we let $f_A$ denote the solution
(\ref{steinsol}) when $h(k)={\mathbf1}(k \in A)$.

\begin{lemma} \label{solbnds-oddsymmetric}
The function $f_A$ satisfies
\[
\sup_{A \subset\mathbb{N}_0}\bigl|f_A(1)\bigr| \le\frac{2}{q^5} \quad
\mbox{and} \quad\sup_{A \subset\mathbb{N}_0, k \ge2}\bigl|f_A(k)\bigr| \le
\frac{1.14}{q^9}.
\]
\end{lemma}

\begin{pf} By Lemma~\ref{fA1boundlem}, for all $A \subset\mathbb{N}_0$,
\begin{eqnarray*}
\bigl|f_A(1)\bigr| & \le& \frac{P(Q \ge1)}{q^2}
\\
& = & \frac{1-p_0}{q^2}
\\
& = & \frac{1}{q^2} \biggl( 1 - \frac{q}{q-1}\prod
_{i \ge1, \;i\; \mathrm{odd}}\biggl(1-\frac{1}{q^i}\biggr) \biggr)
\\
& = & \frac{1}{q^2} \biggl( 1 - \prod_{i \ge3, \;i\; \mathrm{odd}}
\biggl(1-\frac{1}{q^i}\biggr) \biggr)
\\
& \le& \frac{1}{q^2} \biggl(\frac{2}{q^3} \biggr) =
\frac{2}{q^5},
\end{eqnarray*}
where the second inequality used Lemma~\ref{bound}.

For $k \ge1$, by (\ref{prodUUcbound}) and (\ref{pk-odddef}),
\begin{eqnarray*}
\bigl|f_A(k+1)\bigr| &\le& \frac{P(Q \in U_k^c)}{q^2 p_k}
\\
&=& \frac{\prod_{i=1}^{2k+1}(q^i-1)}{q^{2k+3}} \sum_{l=k+1}^\infty
\frac{q^{2l+1}}{\prod_{i=1}^{2l+1}(q^i-1)}
\\
&=& \frac{1}{q^2} \sum_{l=k+1}^\infty
\frac{q^{2(l-k)}}{\prod_{i=2k+2}^{2l+1}(q^i-1)}
\\
&=& \frac{1}{q^2} \sum_{l=k+1}^\infty
\frac
{1}{q^{2(l^2-k^2)+(l-k)}\prod_{i=2k+2}^{2l+1}(1-q^{-i})}
\\
&\le& \frac{1}{q^2\prod_{i=2k+2}^\infty(1-q^{-i})} \sum_{l=k+1}^\infty
\frac{1}{q^{2(l^2-k^2)+(l-k)}}
\\
&=& \frac{1}{q^2 \prod_{i=2k+2}^\infty(1-q^{-i})} \Biggl( \frac
{1}{q^{4k+3}} + \sum
_{l=2}^\infty\frac{1}{q^{2l^2+4lk+l}} \Biggr)
\\
&\le& \frac{1}{q^2 \prod_{i=2k+2}^\infty(1-q^{-i})} \Biggl( \frac
{1}{q^{4k+3}} + \frac{1}{q^{8k}}\sum
_{l=2}^\infty\frac
{1}{q^{2l^2+l}} \Biggr).
\end{eqnarray*}

Hence, for all $k \ge1$, we obtain
\[
\bigl|f_A(k+1)\bigr| \le\frac{1}{q^9\prod_{i=4}^\infty(1-q^{-i})} \Biggl(1+ \frac
{1}{q}\sum
_{l=2}^\infty\frac{1}{q^{2l^2+l}} \Biggr)
\]
and the proof is now completed by using the fact that for all $q \ge2$,
\[
\frac{1}{\prod_{i=4}^\infty(1-q^{-i})} \Biggl(1+ \frac{1}{q}\sum
_{l=2}^\infty\frac{1}{q^{2l^2+l}} \Biggr) \le(1.137)
(1.0005) \leq1.14.
\]
The inequality $\prod_{i=4}^{\infty} (1-q^{-i})^{-1} \le1.137$ is
obtained by applying part 2 of Lemma~\ref{bound}. We also used that
\begin{eqnarray*}
1+ \frac{1}{q}\sum_{l=2}^\infty
\frac{1}{q^{2l^2+l}} &\leq& 1 + \frac
{1}{2} \Biggl( \frac{1}{2^{10}} + \sum
_{l=3}^\infty\frac
{1}{2^{18+l}} \Biggr)
\\
&=&  1 + \frac{1}{2} \biggl( \frac{1}{2^{10}} + \frac{2}{2^{21}} \biggr)
\leq 1.0005.
\end{eqnarray*}\upqed
\end{pf}

We now present the proof of Theorem~\ref{symmetric-oddthm}.

\begin{pf*}{Proof of Theorem~\ref{symmetric-oddthm}}
From the formula (\ref{pkn-odd5def}) for $p_{0,n}$, we obtain
\begin{eqnarray*}
p_{0,n} &=& \frac{N(n,n-1)}{q^{{n \choose2}}}
\\
& =& (1-1/q) \bigl(1-1/q^3
\bigr) \cdots\bigl(1-1/q^{n}\bigr) \frac{q}{q-1}.
\end{eqnarray*}
Thus, now applying (\ref{pk-odddef}), the total variation distance
between ${\mathcal Q}_{q,n}$ and ${\mathcal Q}_q$ is at least
\begin{eqnarray*}
& & \frac{1}{2} [p_{0,n}-p_0]
\\
&&\qquad \geq \frac{q}{2(q-1)}
\biggl[ \biggl(1-\frac{1}{q}\biggr) \biggl(1-\frac{1}{q^3}
\biggr) \cdots\biggl(1-\frac
{1}{q^{n}}\biggr)
\\
&&\hspace*{77pt}{} - \biggl(1-\frac{1}{q}
\biggr) \biggl(1-\frac{1}{q^3}\biggr) \cdots\biggl(1-\frac
{1}{q^{n+2}}
\biggr) \biggr]
\\
&&\qquad =  \frac{q}{2(q-1) q^{n+2}} (1-1/q) \bigl(1-1/q^3\bigr) \cdots
\bigl(1-1/q^{n}\bigr)
\\
&&\qquad  = \frac{1}{2 q^{n+2}} \bigl(1-1/q^3\bigr) \cdots
\bigl(1-1/q^n\bigr)
\\
&&\qquad  \geq \frac{1}{2q^{n+2}} \bigl(1-2/q^{3}\bigr)
\\
&&\qquad  \geq \frac{0.37}{q^{n+2}}.
\end{eqnarray*}
The second inequality used the fourth claim of Lemma~\ref{bound}.

Arguing as for the proof of Theorem~\ref{symmetricnonzerodiagthm},
for any $A \subset\mathbb{N}_0$ we have
\begin{eqnarray*}
&&\bigl|P(Q_n \in A)-P(Q \in A)\bigr|
\\
&&\qquad  =  \bigl|E\bigl[h_A(Q_n)\bigr]-{\mathcal
Q}_qh_A\bigr|
\\
&&\qquad =  \bigl|E\bigl[q^2f_A(Q_n+1) -
\bigl(q^{2Q_n+1}-1\bigr) \bigl(q^{2Q_n}-1\bigr)
f_A(Q_n)\bigr]\bigr|
\\
&&\qquad =  \bigl|E\bigl[q^{-2(m-Q_n-1)}f_A(Q_n+1)\bigr]\bigr|
\\
&&\qquad \le q^{-2(m-1)}\bigl|f_A(1)\bigr|P(Q_n=0)
\\
&&\quad\qquad{} +\bigl|E\bigl[q^{-2(m-Q_n-1)}f_A(Q_n+1){
\mathbf1}(Q_n \ge1)\bigr]\bigr|
\\
&&\qquad \le q^{-2(m-1)}\bigl|f_A(1)\bigr|+E\bigl[q^{-2(m-Q_n-1)}{
\mathbf1}(Q_n \ge1)\bigr]\sup_{k
\ge2}\bigl|f_A(k)\bigr|
\\
&&\qquad \le q^{-2(m-1)}\bigl|f_A(1)\bigr|+E\bigl[q^{-2(m-Q_n-1)}\bigr]\sup
_{k \ge2}\bigl|f_A(k)\bigr|
\\
&&\qquad = q^{-2(m-1)}\bigl|f_A(1)\bigr|+ q^{-2(m-1)}
\bigl(1+q^{-1}-q^{-n}\bigr)\sup_{k \ge
2}\bigl|f_A(k)\bigr|
\\
&&\qquad \le q^{-2(m-1)}\bigl|f_A(1)\bigr|+ q^{-2(m-1)}
\bigl(1+q^{-1}\bigr)\sup_{k \ge
2}\bigl|f_A(k)\bigr|
\\
&&\qquad \le q^{-n+3}\frac{2}{q^5} + 1.14\bigl(q^{-n+3}+q^{-n+2}
\bigr)\frac{1}{q^9}
\\
&&\qquad = 2 q^{-(n+2)}+1.14q^{-(n+6)}+ 1.14q^{-(n+7)}
\\
&&\qquad \le 2.2 q^{-(n+2)}
\end{eqnarray*}
as claimed, where we have applied Lemmas~\ref{mom-symmetric-even} and
\ref{solbnds-oddsymmetric} in the second to last equality, and
inequality, respectively.
\end{pf*}

\section{Skew centrosymmetric matrices over finite fields} \label{skew}
An $n \times n$ matrix $A$ is called skew centrosymmetric if $A_{ij}=-A_{ji}$
and $A_{ij}=A_{n+1-j,n+1-i}$. This section studies the rank
distributions (\ref{pknskeweven}) and (\ref{pknskewodd}) of a
randomly chosen
skew centrosymmetric matrix with entries in $\mathbb{F}_q$ for $q$ odd.

Suppose that $n$ is even. Waterhouse \cite{W} shows that the total
number of skew centrosymmetric
matrices is $q^{(n/2)^2}$, that all such matrices have even rank, and
that the proportion of $n \times n$
skew centrosymmetric matrices of rank $n-2k$ is equal to
%
\begin{eqnarray}\label{pknskeweven}
p_{k,n} = \frac{N(n,n-2k)}{q^{(n/2)^2}},
\nonumber\\[-8pt]\\[-8pt]
\eqntext{\displaystyle\mbox{where } N(n,2h) = \prod_{j=0}^{n/2-h-1}
\frac{q^{n/2}-q^j}{q^{n/2-h}-q^j} \prod_{i=0}^{h-1}
\bigl(q^{n/2}-q^i\bigr).}
\end{eqnarray}
We claim that $p_{k,n}$ in (\ref{pknskeweven}) is exactly equal to
the probability that a uniformly chosen
$n/2 \times n/2$ random matrix with entries from $\mathbb{F}_q$ has
rank $n/2-k$. Indeed, pulling out factors of $q$, one can write (\ref
{pknskeweven}) as
\[
\frac{1}{q^{k^2}} \prod_{j=0}^{k-1}
\biggl( \frac
{1-q^{j-n/2}}{1-q^{j-k}} \biggr) \prod_{j=k+1}^{n/2}
\bigl(1-q^{-j}\bigr).
\]
Comparing this expression with (\ref{defQqnrect}) for the case $m=0$
with $n$ replaced by $n/2$ shows that it is sufficient to prove that
\[
\prod_{j=0}^{k-1} \biggl(
\frac{1-q^{j-n/2}}{1-q^{j-k}} \biggr) = \frac{\prod_{j=k+1}^{n/2}
(1-q^{-j})}{\prod_{j=1}^{n/2-k} (1-q^{-j})}.
\]
This identity holds since both
\[
\prod_{j=0}^{k-1} \bigl(1-q^{j-n/2}
\bigr) \prod_{j=1}^{n/2-k}
\bigl(1-q^{-j}\bigr)
\]
and
\[
\prod
_{j=0}^{k-1} \bigl(1-q^{j-k}\bigr) \prod
_{j=k+1}^{n/2} \bigl(1-q^{-j}\bigr)
\]
are equal to $\prod_{i=1}^{n/2} (1-1/q^i)$. Hence, the following
corollary is immediate from Theorem~\ref{main}.

\begin{cor}
For $q \ge2$, let ${\mathcal Q}_q$ be the distribution (\ref
{defQqrect}) on $\mathbb{N}_0$, specialized to $m=0$. For $n$ even
in $\mathbb{N}_0$, let ${\mathcal Q}_{q,n}$ be the distribution on $\{
0,\ldots,n/2 \}$ with mass function (\ref{pknskeweven}).
Then
\[
\frac{1}{8 q^{n/2+1}} \leq\|{\mathcal Q}_{q,n}-{\mathcal
Q}_q\|_{\mathrm{TV}} \leq\frac{3}{q^{n/2+1}}.
\]
\end{cor}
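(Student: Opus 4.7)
The plan is to observe that this corollary is essentially a direct consequence of Theorem \ref{main}, using the identification of distributions already established in the paragraph immediately preceding the corollary. Specifically, the authors have shown that the point probability $p_{k,n}$ from \eqref{pkn:skew.even}, describing the rank distribution of $n \times n$ skew centrosymmetric matrices, coincides with the point probability in \eqref{def:Qqn.rect} when $m=0$ and with $n$ replaced by $n/2$. This reduction transforms the problem into one already solved.

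First I would record the distributional identity: if $M_n$ is a uniformly random $n \times n$ skew centrosymmetric matrix over $\mathbb{F}_q$ (with $n$ even) and $Q_n = n/2 - \mathrm{rank}(M_n)/2$, then ${\mathcal L}(Q_n)$ is exactly ${\mathcal Q}_{q, n/2}$ in the sense of \eqref{def:Qqn.rect} with $m=0$. This is precisely what the algebraic manipulation with the two equal products $\prod_{i=1}^{n/2}(1-1/q^i)$ in the preceding paragraph establishes. In particular the support $\{0, 1, \ldots, n/2\}$ of the corollary's ${\mathcal Q}_{q,n}$ matches the support $U_{n/2}$ of the rectangular rank defect distribution.

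Next I would invoke Theorem \ref{main} directly with parameter $m=0$ and with $n$ replaced by $n/2$, which yields
\[
\frac{1}{8 q^{n/2+0+1}} \;\leq\; \|{\mathcal Q}_{q,n/2}-{\mathcal Q}_q\|_{TV} \;\leq\; \frac{3}{q^{n/2+0+1}}.
\]
Since the corollary's ${\mathcal Q}_{q,n}$ coincides with ${\mathcal Q}_{q,n/2}$ (in the $m=0$ rectangular sense) and its ${\mathcal Q}_q$ coincides with the $m=0$ limit in \eqref{def:Qq.rect}, the bounds translate verbatim to
\[
\frac{1}{8 q^{n/2+1}} \;\leq\; \|{\mathcal Q}_{q,n}-{\mathcal Q}_q\|_{TV} \;\leq\; \frac{3}{q^{n/2+1}},
\]
which is exactly the claim.

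There is no genuine obstacle here; the only substantive content is the algebraic identification of the two distributions, and that has already been carried out in the text preceding the corollary. The proof therefore reduces to citing Theorem \ref{main} after the change of parameters $m \mapsto 0$, $n \mapsto n/2$.
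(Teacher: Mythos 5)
Your proposal is correct and matches the paper's own argument: the paper establishes the identification of the skew centrosymmetric rank distribution (\ref{pkn:skew.even}) with the $m=0$ rectangular rank-defect distribution for $n/2 \times n/2$ matrices in the paragraph preceding the corollary, and then states that the corollary is immediate from Theorem \ref{main}. Your change of parameters $m \mapsto 0$, $n \mapsto n/2$ in Theorem \ref{main} is precisely how the paper obtains the bounds.
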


Now suppose that $n$ is odd. Waterhouse \cite{W} shows that the total
number of skew centrosymmetric
matrices is $q^{(n-1)^2/4+(n-1)/2}$, that all such matrices have even
rank and that the number of $n \times n$
skew centrosymmetric matrices of rank $2h$ is equal to
\[
N(n,2h) = \prod_{j=0}^{(n-1)/2-h}
\frac
{q^{(n-1)/2+1}-q^j}{q^{(n-1)/2+1-h}-q^j} \prod_{i=0}^{h-1}
\bigl(q^{(n-1)/2}-q^i\bigr).
\]
Hence,
%
\begin{equation}
\label{pknskewodd} p_{k,n} = \frac{N(n,n-2k-1)}{q^{(n-1)^2/4 +
(n-1)/2}}, \qquad k \in U_{(n-1)/2} = \bigl\{0,1,\ldots,(n-1)/2\bigr\}
\end{equation}
is the proportion of skew centrosymmetric matrices of
rank $n-2k-1$. The main result in this section is Theorem~\ref{main2},
which provides bounds on the total variation distance between
${\mathcal Q}_{q,n}$, the distribution given in (\ref{pknskewodd}),
and ${\mathcal Q}_q$, given by
%
\begin{equation}
\label{1pkdef} p_k = \frac{\prod_{i \geq1}
(1-1/q^i)}{q^{k^2+k}(1-1/q^{k+1})\prod_{i=1}^k(1-1/q^i)^2}, \qquad k \in \mathbb{N}_0.
\end{equation}

\begin{theorem} \label{main2}
For $n \geq1$ odd, and $q \geq2$, we have that
\[
\frac{1}{4 q^{(n+3)/2}} \leq\|{\mathcal Q}_{q,n}-{\mathcal
Q}_q\|_{\mathrm{TV}} \leq\frac{3}{q^{(n+3)/2}}.
\]
\end{theorem}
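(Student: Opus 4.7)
The plan is to exploit the observation that, after the substitution $r=(n-1)/2$, the present ${\mathcal Q}_q$ from \eqref{1pkdef} and ${\mathcal Q}_{q,n}$ from \eqref{pkn:skew.odd} coincide respectively with the distributions \eqref{def:Qq.rect} and \eqref{def:Qqn.rect} of Section \ref{uniform} taken in the special case $m=1$, $n \mapsto r$. Granting this, Theorem \ref{main} applied with those parameters yields the upper bound $3/q^{r+2} = 3/q^{(n+3)/2}$ in one stroke, and the only thing left to show is the slightly improved lower bound $1/(4q^{(n+3)/2})$ (Theorem \ref{main} directly would deliver only $1/(8q^{(n+3)/2})$), which I would recover from a short direct estimate at $k=0$.

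To verify the identification I would use the uniqueness clause of Lemma \ref{ab.char:lem}. Taking ratios in \eqref{1pkdef} and simplifying gives
\[
\frac{p_{k-1}}{p_k} = \frac{(q^k-1)(q^{k+1}-1)}{q},
\]
matching the ratio computed in Section \ref{uniform} for the limit distribution with $m=1$. For the finite-$n$ ratio, I would first recast Waterhouse's formula by extracting factors $q^j$ from every $q^{r+1}-q^j$ and $q^{r+1-h}-q^j$, obtaining the telescoping form
\[
N(n,2h) = q^{h(h-1)/2}\,\frac{\prod_{l=h+1}^{r+1}(q^l-1)}{\prod_{l=1}^{r+1-h}(q^l-1)}\,\prod_{l=r-h+1}^{r}(q^l-1).
\]
Setting $h=r-k$ and dividing, the expression collapses to
\[
\frac{p_{k-1,n}}{p_{k,n}} = \frac{(q^k-1)(q^{k+1}-1)}{q\bigl(1-q^{-(r-k+1)}\bigr)},
\]
which is precisely the Section \ref{uniform} ratio at $m=1$, $n\mapsto r$. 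Since the supports agree and the associated $a(k)$ is nonzero on the support, Lemma \ref{ab.char:lem} forces the distributions to coincide.

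For the sharpened lower bound, the identification gives $p_{0,n}=\prod_{i=2}^{r+1}(1-1/q^i)$ and $p_0=\prod_{i=2}^{\infty}(1-1/q^i)$, hence
\[
p_{0,n}-p_0 = \prod_{i=2}^{r+1}(1-1/q^i)\,\left[1-\prod_{i=r+2}^{\infty}(1-1/q^i)\right].
\]
The bracket is at least $1/q^{r+2}$ because the infinite product it subtracts is bounded above by its first factor $1-1/q^{r+2}$. The prefactor is at least $1-2/q^2$ by the last inequality of Lemma \ref{bound}, which is $\ge 1/2$ for every $q \ge 2$. Combining,
\[
\|{\mathcal Q}_{q,n}-{\mathcal Q}_q\|_{TV} \;\ge\; \tfrac{1}{2}(p_{0,n}-p_0) \;\ge\; \frac{1}{4\,q^{(n+3)/2}},
\]
completing the proof.

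The main obstacle is the reduction of $p_{k-1,n}/p_{k,n}$, since Waterhouse's expression is a product of two running products whose indices shift simultaneously with $h$; once recast in the $q$-Pochhammer form above, the telescoping is mechanical. No new Stein-solution bound or moment identity is required, since the entire Stein-equation analysis for this $({\mathcal Q}_q,{\mathcal Q}_{q,n})$ pair is already subsumed in Section \ref{uniform} via the $m=1$ specialization.
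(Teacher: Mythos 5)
Your identification is correct and genuinely different from the paper's route. Writing $r=(n-1)/2$, the telescoping form you give for $N(n,2h)$ follows by pulling out powers of $q$ from Waterhouse's formula, and the resulting ratio $p_{k-1,n}/p_{k,n} = (q^k-1)(q^{k+1}-1)/\bigl(q(1-q^{-(r-k+1)})\bigr)$ is exactly the Section \ref{uniform} ratio at $m=1$, $n\mapsto r$; the same match holds for the limiting ratios from \eqref{1pkdef}, so Lemma \ref{ab.char:lem} forces both pairs of distributions to coincide. The paper, by contrast, proves Theorem \ref{main2} from scratch with a dedicated Stein analysis: it derives the characterization around \eqref{Qnq}, computes the moment $E[q^{Q_n}]=1+q^{-1}-q^{-(n+1)/2}$ in Lemma \ref{mom3}, bounds the Stein solution by $2/q^3$ in Lemma \ref{bound3}, and then runs the Stein argument to get the upper bound. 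Your reduction subsumes all of that under Theorem \ref{main} with $m=1$ — which is in fact the exact strategy the paper itself uses for the even-$n$ skew case (the Corollary following \eqref{pkn:skew.even} identifies that distribution with the $m=0$ uniform case), so your observation fills a gap in the paper's presentation. Your direct lower-bound estimate is also sound and gives the same constant $1/4$; the paper gets there by truncating $p_0$ at one extra factor rather than by the tail inequality you use, but the two are equivalent. The one small gap: Theorem \ref{main} is stated for $n\geq 1$, so the reduction does not cover the case $n=1$ of Theorem \ref{main2} (where $r=0$). That case should be checked separately — there ${\mathcal Q}_{q,1}$ is a point mass at $0$ and $||{\mathcal Q}_{q,1}-{\mathcal Q}_q||_{TV}=1-\prod_{i\geq 2}(1-1/q^i)$, which is easily seen to lie in $[1/q^2,\,2/q^2]\subset[1/(4q^2),\,3/q^2]$ — so the statement survives, but the proposal should flag this explicitly rather than letting it slide.
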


We begin with the following characterization lemma.

\begin{lemma} 
If $Q$ has the ${\mathcal Q}_q$ distribution, then
\[
E \bigl[qf(Q+1) \bigr]=E \bigl[\bigl(q^{Q}-1\bigr)
\bigl(q^{Q+1}-1\bigr) f(Q) \bigr]
\]
for all functions $f$ for which these expectations exist.

If $Q_n$ has the ${\mathcal Q}_{q,n}$ distribution, then
%
\begin{equation}\label{Qnq}
E \bigl[ \bigl(q - q^{Q_n+1-(n-1)/2} \bigr) f(Q_n+1)
\bigr]
 =E \bigl[\bigl(q^{Q_n}-1\bigr) \bigl(q^{Q_n+1}-1\bigr)
f(Q_n) \bigr]\hspace*{-30pt}
\end{equation}
for all functions $f$ for which these expectations exist.
\end{lemma}

\begin{pf} For the first assertion, one calculates that
\[
\frac{p_{k-1}}{p_k} = \frac{(q^k-1)(q^{k+1}-1)}{q}, \qquad k \in\mathbb{N}_0.
\]
Taking $a(k)=q$ and $b(k)=(q^k-1)(q^{k+1}-1)$ in Lemma~\ref{abcharlem}, the first assertion follows.

For the second assertion, one calculates that
\[
\frac{p_{k-1,n}}{p_{k,n}} = \frac{N(n,n-2k+1)}{N(n,n-2k-1)} = \frac
{(q^k-1)(q^{k+1}-1)}{q-q^{k-(n-1)/2}}, \qquad k \in
U_{(n-1)/2}.
\]
Taking $a(k)=q-q^{k-(n-1)/2}$ and $b(k)=(q^k-1)(q^{k+1}-1)$, noting
that $a((n-1)/2+1)=0$, the second assertion follows by Lemma~\ref{abcharlem}.
\end{pf}

Lemma~\ref{mom3} calculates the expected value of $q^{Q_n}$.

\begin{lemma} \label{mom3} If $Q_n$ has distribution ${\mathcal
Q}_{q,n}$, then
\[
E\bigl[q^{Q_n}\bigr] = 1 + \frac{1}{q} - \frac{1}{q^{(n+1)/2}}.
\]
\end{lemma}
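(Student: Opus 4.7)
The plan is to mimic the pattern already used in Lemma \ref{qtoX}, Lemma \ref{mom}, and Lemma \ref{mom-symmetric-even}: plug an exponential test function into the Stein characterization (\ref{Qnq}), obtain a linear recurrence in the moments $c_k = E[q^{kQ_n}]$, and then read off the desired first moment by choosing the shift that makes the recurrence collapse.

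Concretely, the first step is to apply (\ref{Qnq}) with $f(x)=q^{kx}$. This gives
\beas
q^{k+1} c_k - q^{k+1-(n-1)/2} c_{k+1} = q c_{k+2} - (q+1) c_{k+1} + c_k,
\enas
which after rearrangement becomes
\beas
q \, c_{k+2} = \bigl( q+1 - q^{k+1-(n-1)/2} \bigr) c_{k+1} + (q^{k+1}-1) c_k.
\enas
The second step is to substitute $k=-1$. Since ${\mathcal Q}_{q,n}$ is a probability distribution we have $c_0=1$, and moreover the coefficient $q^{k+1}-1$ in front of $c_k$ vanishes at $k=-1$, so the potentially awkward $c_{-1}$ term disappears. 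What remains is the single equation
\beas
q \, c_1 = q + 1 - q^{-(n-1)/2},
\enas
from which dividing by $q$ yields $c_1 = 1 + 1/q - 1/q^{(n+1)/2}$, as claimed.

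There is really no main obstacle; the nontrivial content has already been packaged into the characterization (\ref{Qnq}), which in turn rests on Lemma \ref{ab.char:lem}. The only thing one has to verify carefully is the arithmetic of the recursion and the cancellation at $k=-1$, and these are both routine. An alternative plan would be to attempt a direct algebraic identity using the explicit formula (\ref{pkn:skew.odd}) for $p_{k,n}$, but this would require a $q$-series manipulation analogous to the appendix's $q$-Chu--Vandermonde argument and is considerably less efficient than the Stein-type approach above.
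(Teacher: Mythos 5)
Your proof is correct, and it is essentially identical to the paper's own: both set $f(x)=q^{kx}$ in the characterization (\ref{Qnq}) to obtain the recurrence $q\,c_{k+2} = (q+1-q^{k+1-(n-1)/2})c_{k+1} + (q^{k+1}-1)c_k$, and then substitute $k=-1$, using $c_0=1$ and the vanishing of the $c_{-1}$ coefficient.
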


\begin{pf} Let $c_k = E[q^{k Q_n}]$, and set $f(x)=q^{kx}$ in (\ref
{Qnq}). Elementary manipulations yield the
recurrence
\[
q c_{k+2} = \bigl(q+1-q^{k+1 - (n-1)/2}\bigr) c_{k+1} +
\bigl(q^{k+1}-1\bigr) c_k.
\]
The result now follows by
setting $k=-1$ and using that $c_0=1$.
\end{pf}

In the remainder of this section, we consider the Stein equation (\ref
{receq}) for the target distribution ${\mathcal Q}_q$ with
\[
a(k)=q \quad\mbox{and} \quad b(k)=\bigl(q^k-1\bigr)
\bigl(q^{k+1}-1\bigr)
\]
and for $A \subset\mathbb{N}_0$ we let $f_A$ denote the solution
(\ref{steinsol}) when $h(k)={\mathbf1}(k \in A)$.

\begin{lemma} \label{bound3} The function $f_A$ satisfies
\[
\sup_{A \subset\mathbb{N}_0} \bigl|f_A(k)\bigr| \leq\frac{2}{q^3}.
\]
\end{lemma}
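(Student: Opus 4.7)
The plan is to follow the template of Lemmas \ref{f.bound} and \ref{sol:bnds-even:symmetric}: set $f_A(0)=0$ as allowed, and split the supremum into the base case $k=1$ and the tail $k\ge 2$.

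For $k=1$, Lemma \ref{fA1:bound:lem} gives $|f_A(1)|\le P(Q\ge 1)/q=(1-p_0)/q$. From \eqref{1pkdef} one reads off $p_0=\prod_{i\ge 2}(1-1/q^i)$. The inequality \eqref{proda.ge.suma} combined with the geometric sum $\sum_{i\ge 2}q^{-i}=1/(q(q-1))\le 2/q^2$ for $q\ge 2$ gives $p_0\ge 1-2/q^2$, whence $|f_A(1)|\le 2/q^3$.

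For $k\ge 1$, combining \eqref{prodU.Uc.bound} with $a(k+1)=q$, substituting \eqref{1pkdef}, and writing $l=k+j$ so that $l^2+l-k^2-k=2kj+j^2+j$, yields
\[
|f_A(k+1)|\le\frac{1}{q}\sum_{j=1}^\infty\frac{p_{k+j}}{p_k}\le \frac{1}{q}\sum_{j=1}^\infty\frac{1}{q^{2kj+j^2+j}\prod_{i=k+1}^{k+j}(1-q^{-i})^2},
\]
where the factor $(1-q^{-(k+1)})/(1-q^{-(k+j+1)})\le 1$ has been discarded. By the last claim of Lemma \ref{bound}, the product in the denominator is at least $(1-2/q^{k+1})^2\ge 1/4$ uniformly for $q\ge 2$ and $k\ge 1$, so its reciprocal is at most $4$. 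Peeling off the $j=1$ term (which contributes $q^{-(2k+2)}$) and controlling the remainder by a rapidly decaying geometric series (the ratio between successive terms of the $j$-sum is at most $q^{-(2k+2j+2)}\le q^{-6}$), one obtains a bound of the form
\[
|f_A(k+1)|\le \frac{4.1}{q^{2k+3}},
\]
which lies well below $2/q^3$ for every $k\ge 1$ and $q\ge 2$.

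The main obstacle is numerical rather than structural: at the worst case $q=2$, $k=1$, neither the factor $\prod_{i\ge 2}(1-q^{-i})^{-2}$ nor the tail $\sum_{j\ge 1}q^{-(2j+j^2+j)}$ is in a deep-asymptotic regime, so care is required to ensure that the constants pulled out of the two estimates multiply to give a number strictly below $2$. A direct computation with $q=2$, $k=1$ produces a bound of order $0.13$, comfortably within the required margin, confirming that the slack in the dyadic estimates above is more than sufficient.
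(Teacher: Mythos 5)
Your proposal is correct and follows essentially the same route as the paper: Lemma \ref{fA1:bound:lem} plus (\ref{proda.ge.suma}) for $f_A(1)$, and for $k\ge 1$ the bound (\ref{prodU.Uc.bound}) with the ratio $p_{k+j}/p_k$ controlled by a uniform factor of $4$ (coming from $(1-2/q^{k+1})^{-2}$, itself just (\ref{proda.ge.suma}) via the last claim of Lemma \ref{bound}) times a rapidly decaying sum with leading exponent $2k+2$. The paper organizes the sum slightly differently (pulling out $\prod_{j\ge k+1}(1-q^{-j})^{-2}$ once, then bounding $\sum_l q^{-(l^2+l)}\le 1/2$ to land exactly on $2/q^3$), but the decomposition, the key inequalities, and the numerical constants are the same.
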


\begin{pf} By Lemma~\ref{fA1boundlem} and (\ref{1pkdef}),
\begin{eqnarray*}
\bigl|f_A(1)\bigr| & \leq& \frac{P(Q \geq1)}{q}
\\
& = & \frac{1-p_0}{q}
\\
& = & \frac{1 - \prod_{i \geq2} (1-1/q^i)}{q}
\\
& \leq& \frac{1- (1-\sum_{i \ge2} 1/q^i)}{q}
\\
& = & \frac{1}{q^3(1-1/q)}
\\
& \leq& 2/q^3,
\end{eqnarray*}
where we have applied (\ref{prodagesuma}) in the second inequality,
and used
that $q \geq2$.

For $k \geq1$, by (\ref{prodUUcbound}),
\begin{eqnarray*}
\bigl|f_A(k+1)\bigr|
&\leq& \frac{P(Q \in U_k^c)}{q p_k}
\\
&=&  q^{k^2+k-1}\bigl(1-1/q^{k+1}\bigr)
\\
&&{}\times  \prod_{i=1}^k\bigl(1-1/q^i
\bigr)^2
\sum_{l=k+1}^{\infty}
\frac{1}{q^{l^2+l}(1-1/q^{l+1})\prod_{i=1}^l(1-1/q^i)^2}
\\
&=&  q^{k^2+k-1} \sum_{l=k+1}^{\infty}
\frac{1}{q^{l^2+l}
(1-1/q^{k+1})(1-1/q^{l+1}) \prod_{j=k+2}^l (1-1/q^j)^2}
\\
&\leq& q^{k^2+k-1} \sum_{l=k+1}^{\infty}
\frac{1}{q^{l^2+l} \prod_{j=k+1}^l (1-1/q^j)^2}
\\
&\leq& \frac{q^{k^2+k-1}}{\prod_{j=k+1}^{\infty} (1-1/q^j)^2} \sum
_{l=k+1}^{\infty}
\frac{1}{q^{l^2+l}}
\\
& \leq& \frac{q^{k^2+k-1}}{(1 - \sum_{j=k+1}^{\infty} 1/q^j)^2} \sum
_{l=k+1}^{\infty}
\frac{1}{q^{l^2+l}}
\\
&=&  \frac{q^{k^2+k-1}}{(1 - (1/({q^k(q-1)})))^2} \sum_{l=1}^{\infty}
\frac{1}{q^{(k+l)^2+k+l}}
\\
&\leq& \frac{4}{q}\sum_{l=1}^{\infty}
\frac{1}{q^{l^2+l+2kl}}
\\
&\le& \frac{4}{q^3}\sum_{l=1}^{\infty}
\frac{1}{q^{l^2+l}}
\\
&\le& \frac{4}{q^3}\sum_{l=1}^\infty
\frac{1}{2^{l+1}}
\\
&=&  \frac{2}{q^3},
\end{eqnarray*}
where (\ref{prodagesuma}) was applied in the fourth inequality.
\end{pf}

We now present the proof of Theorem~\ref{main2}.

\begin{pf*}{Proof of Theorem~\ref{main2}}
From the formula (\ref{pknskewodd}) for $p_{0,n}$, one
computes that
\[
p_{0,n} = \frac{N(n,n-1)}{q^{(n-1)^2/4 + (n-1)/2}} = \bigl(1-1/q^2\bigr)
\bigl(1-1/q^3\bigr) \cdots\bigl(1-1/q^{(n+1)/2}\bigr).
\]
Thus, using (\ref{1pkdef}), the total variation distance between
${\mathcal Q}_{q,n}$ and ${\mathcal Q}_q$ is at least
\begin{eqnarray*}
\frac{1}{2} [p_{0,n} - p_0]
&\geq& \frac{1}{2} \bigl[\bigl(1-1/q^2\bigr)
\bigl(1-1/q^3\bigr) \cdots\bigl(1-1/q^{(n+1)/2}\bigr) \bigr]
\\
&&{} - \frac{1}{2} \bigl[\bigl(1-1/q^2\bigr)
\bigl(1-1/q^3\bigr) \cdots\bigl(1-1/q^{(n+3)/2}\bigr)\bigr]
\\
&=&  \frac{1}{2 q^{(n+3)/2}} \bigl(1-1/q^2\bigr) \bigl(1-1/q^3
\bigr) \cdots\bigl(1-1/q^{(n+1)/2}\bigr).
\end{eqnarray*}
By part 1 of Lemma~\ref{bound},
\begin{eqnarray*}
&& \bigl(1-1/q^2\bigr) \bigl(1-1/q^3\bigr) \cdots
\bigl(1-1/q^{(n+1)/2}\bigr)
\\
&&\qquad =  \frac{(1-1/q)(1-1/q^2) \cdots(1-1/q^{(n+1)/2})}{(1-1/q)}
\\
&&\qquad \geq \frac{1-1/q-1/q^2}{1-1/q}
\\
&&\qquad  \geq 1/2.
\end{eqnarray*}
It follows that the total variation distance between ${\mathcal
Q}_{q,n}$ and ${\mathcal Q}_q$ is at least
$1/(4 q^{(n+3)/2})$.

For the upper bound, arguing as in Theorem~\ref{main},
\begin{eqnarray*}
&& \bigl|P(Q_n \in A) - P(Q \in A)\bigr|
\\
&&\qquad  =  \bigl|E\bigl[h_A(Q_n)\bigr] - Q_q
h_A\bigr|
\\
&&\qquad =  \bigl|E\bigl[q f_A(Q_n+1) - \bigl(q^{Q_n}-1
\bigr) \bigl(q^{Q_n+1}-1\bigr) f_A(Q_n)\bigr]\bigr|
\\
&&\qquad =  \bigl|E\bigl[q^{Q_n+1-(n-1)/2} f_A(Q_n+1)\bigr]\bigr|
\\
&&\qquad \leq \|f_A\| E\bigl[q^{Q_n+1-(n-1)/2}\bigr].
\end{eqnarray*}
By Lemmas~\ref{mom3} and~\ref{bound3}, this quantity is at most
\[
\frac{2}{q^3} q^{1-(n-1)/2} (1+1/q) \leq\frac{3}{q^{(n+3)/2}}.
\]\upqed
\end{pf*}

\section{Hermitian matrices over finite fields} \label{Hermitian}
Let $q$ be odd. Suppose that $\theta\in\mathbb{F}_{q^2}, \theta^2
\in\mathbb{F}_q$, but $\theta\notin\mathbb{F}_q$. Then any $\alpha
\in\mathbb{F}_{q^2}$ can be written $\alpha= a + b \theta$ with
\mbox{$a,b \in\mathbb{F}_q$}. By the conjugate of $\alpha$, we mean
$\bar{\alpha}=a - b \theta$. If $A = (\alpha_{ij})$ is a
square~matrix, $\alpha_{ij} \in\mathbb{F}_{q^2}$, let $A^*=
\overline{A}' = (\bar{\alpha}_{ij})'$, where the prime denotes
transpose. Then $A$ is said to be Hermitian if and only if $A^*=A$.

By \cite{CH}, for $q$ odd the total number of $n \times n$ Hermitian matrices
over $\mathbb{F}_q$ is $q^{n^2}$, and the total number of such
matrices with rank $r$ is
\[
N(n,r) = q^{{r \choose2}} \prod_{i=1}^r
\frac{q^{2n-2(r-i)}-1}{q^i -
(-1)^i}.
\]
Hence, the proportion of such matrices with rank $n-k$ is given by
%
\begin{equation}
\label{pknhermitian} p_{k,n} = \frac{N(n,n-k)}{q^{n^2}}, \qquad k \in
U_n = \{0,\ldots,n\}.
\end{equation}
In this section we compute total variation bounds between the
distribution (\ref{pknhermitian}), denoted ${\mathcal Q}_{q,n}$, and
the distribution
%
%
\begin{equation}
\label{pkhermitian} p_k = \prod_{i\; \mathrm{odd}}
\frac{1}{1+1/q^i} \cdot\frac{1}{q^{k^2}\prod_{i=1}^k (1-1/q^{2i})},
\end{equation}
which we denote here by ${\mathcal Q}_q$.

\begin{remark*} The distribution (\ref{pkhermitian}) also arises as a
limiting law
in the study of the dimension of the fixed space of a random
element of the finite unitary group $U(n,q)$. More precisely, the paper
\cite{RS} proves that for $k$ fixed, the chance that
a uniformly chosen random element of $U(n,q)$ has a $k$ dimensional
fixed space tends to $p_k$ as $n \rightarrow\infty$.
See \cite{F3} for another proof.

The main theorem of this section is the following result.
\end{remark*}

\begin{theorem} \label{mainlast}
For all $n \ge1$ and $q \ge2$, we have
\[
\frac{0.07}{q^{n+1}} \leq\|{\mathcal Q}_{q,n}-{\mathcal
Q}_q\|_{\mathrm{TV}} \leq\frac{2.3}{q^{n+1}}.
\]
\end{theorem}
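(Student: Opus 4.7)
The plan follows the template of earlier sections: derive Stein characterizations for both ${\mathcal Q}_q$ and ${\mathcal Q}_{q,n}$ via Lemma \ref{ab.char:lem}, compute the relevant moments of $Q_n$, bound the Stein solution $f_A$, and convert these into upper and lower total variation bounds. The new feature here is the alternating sign $(-1)^{n-k}$ arising from the factor $q^i-(-1)^i$ in $N(n,r)$. From (\ref{pk:hermitian}) one has $p_{k-1}/p_k=(q^{2k}-1)/q$, so with $a(k)=q$, $b(k)=q^{2k}-1$ Lemma \ref{ab.char:lem} gives the characterization $E[qf(Q+1)]=E[(q^{2Q}-1)f(Q)]$ for ${\mathcal Q}_q$. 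For ${\mathcal Q}_{q,n}$, using $q^{2i}-1=(q^i-1)(q^i+1)$ in (\ref{pkn:hermitian}), the ratio $p_{k-1,n}/p_{k,n}$ simplifies to $(q^{2k}-1)/(q+(-1)^{n-k}q^{k-n})$; taking $a(k)=q+(-1)^{n-k}q^{k-n}$ (note $a(n+1)=0$) and $b(k)=q^{2k}-1$ yields
\[
E[(q+(-1)^{n-Q_n-1}q^{Q_n+1-n})f(Q_n+1)]=E[(q^{2Q_n}-1)f(Q_n)].
\]

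Substituting $f(x)=(-1/q)^x$ into this characterization, the various sign and power factors collapse algebraically to give $E[(-q)^{Q_n}]=(-1)^nq^{-n}$; then setting $f\equiv 1$ and using this value yields $E[q^{2Q_n}]=q+1-q^{1-2n}$, whence by Cauchy--Schwarz $E[q^{Q_n}]\le\sqrt{q+1}$. For the Stein solution bounds, Lemma \ref{fA1:bound:lem} together with the estimate $p_0=\prod_{i\text{ odd}}(1+1/q^i)^{-1}\ge\prod_{i\text{ odd}}(1-1/q^i)\ge 1-1/q-1/q^3$ (third claim of Lemma \ref{bound}) gives $|f_A(1)|\le(1-p_0)/q\le 1/q^2+1/q^4$. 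For $k\ge 1$, (\ref{prodU.Uc.bound}) combined with (\ref{pk:hermitian}) yields
\[
|f_A(k+1)|\le\frac{1}{q\prod_{i\ge k+1}(1-1/q^{2i})}\sum_{l\ge k+1}\frac{1}{q^{l^2-k^2}},
\]
and bounding the product by Lemma \ref{bound} applied at $q^2$ and the sum by its first term $1/q^{2k+1}$ plus a small geometric tail produces $\sup_{k\ge 2}|f_A(k)|\le C_2/q^4$ for an explicit $C_2<1.5$. For the upper bound on total variation, subtracting the two characterizations applied to $f_A$ gives
\[
|P(Q_n\in A)-P(Q\in A)|=|E[(-1)^{n-Q_n}q^{Q_n+1-n}f_A(Q_n+1)]|\le E[q^{Q_n+1-n}|f_A(Q_n+1)|];
\]
splitting at $Q_n=0$, plugging in the $f_A$ bounds, and using $q^{1-n}E[q^{Q_n}]\le q^{1-n}\sqrt{q+1}$ delivers $2.3/q^{n+1}$.

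For the lower bound, factoring $q^{2i}-1=(q^i-1)(q^i+1)$ in $N(n,n)$ yields $p_{0,n}=\prod_{i\le n,\text{ odd}}(1-1/q^i)\prod_{i\le n,\text{ even}}(1+1/q^i)$, and the classical Euler identity $\prod_{m\ge 1}(1+x^m)=\prod_{m\text{ odd}}(1-x^m)^{-1}$ (applied at $x=1/q^2$) identifies $p_0=\prod_{i\text{ odd}}(1-1/q^i)\prod_{i\text{ even}}(1+1/q^i)$ as the infinite analogue. Writing $p_{0,n}-p_0=p_{0,n}(1-R_n)$ with $R_n$ the corresponding tail product and grouping consecutive pairs $(1-1/q^{n+1+2k})(1+1/q^{n+2+2k})$, each pair equals $1-(q-1)/q^{n+2+2k}-1/q^{2n+3+4k}$ when $n$ is even (with opposite signs when $n$ is odd); this forces $|1-R_n|\ge (q-1)/(2q^{n+2})$, and combined with $p_{0,n}\ge 1-1/q-1/q^3\ge 3/8$ from Lemma \ref{bound}, delivers $|p_{0,n}-p_0|\ge 0.07/q^{n+1}$.

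The main obstacle is the alternating sign $(-1)^{n-k}$: it forces the detour through $E[(-q)^{Q_n}]$ in the moment computation and, more substantially, renders $R_n$ genuinely alternating in its factors rather than monotone, so that pair-grouping (rather than a direct use of (\ref{proda.ge.suma})) is required to extract a clean lower bound on $|1-R_n|$ of the correct order $1/q^{n+1}$.
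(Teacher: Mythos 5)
Your proposal follows the same overall template as the paper's proof of Theorem \ref{mainlast}: the same Stein characterizations from Lemma \ref{ab.char:lem} (with $a(k)=q$, $b(k)=q^{2k}-1$ for ${\mathcal Q}_q$ and $a(k)=q-(-1)^{n-k+1}q^{k-n}$ for ${\mathcal Q}_{q,n}$, noting your $q+(-1)^{n-k}q^{k-n}$ is the same function), the same $P(Q_n=0)$ split in the upper bound, the same bound on $f_A$ derived from \eqref{prodU.Uc.bound}, and the same pointwise lower bound at $k=0$ controlled via pair-grouping of the tail ratio $p_0/p_{0,n}$ (the paper uses the identical pairing, just implicitly, when establishing $p_0\ge(1+1/q^{n+1})(1-1/q^{n+2})p_{0,n}$ for $n$ odd and the mirror inequality for $n$ even).

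The one genuinely different step is your moment bound. The paper substitutes $f(x)=q^{-x}$ into the characterization, cancels the sign term after observing $(-1)^{n-Q_n}q^{-n}\le q^{-n}$, and obtains $E[q^{Q_n}]\le 2+q^{-n}$ in one line. You instead substitute $f(x)=(-1/q)^x$ to get the exact identity $E[(-q)^{Q_n}]=(-1)^n q^{-n}$, feed that into the $f\equiv 1$ case to get $E[q^{2Q_n}]=q+1-q^{1-2n}$ exactly, and then apply Cauchy--Schwarz to conclude $E[q^{Q_n}]\le\sqrt{q+1}$. Both bounds suffice for the upper bound here (with your route the factor $1+1/q^2+C_2\sqrt{q+1}/q^2$ is maximized at $q=2$ where it is well under $2.3$), but they are not uniformly comparable: yours is tighter for $q=2,3,4$ while the paper's $2+q^{-n}$ is tighter for $q\ge 5$ and stays bounded as $q\to\infty$, whereas $\sqrt{q+1}$ grows. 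Your route does have the nice byproduct of computing $E[(-q)^{Q_n}]$ and $E[q^{2Q_n}]$ exactly, information the paper's shortcut does not produce. One small caution on the lower bound: you should be careful not to take the additional factor of $1/2$ from the $\frac{1}{2}\sum_s$ form of total variation after already working with $|p_{0,n}-p_0|$ under the $\max_A$ form; your stated target $|p_{0,n}-p_0|\ge 0.07/q^{n+1}$ is exactly what is needed since $||{\mathcal Q}_{q,n}-{\mathcal Q}_q||_{TV}\ge|p_{0,n}-p_0|$ by choosing $A=\{0\}$.
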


The following lemma characterizes the two distributions of interest in
this section.

\begin{lemma} 
If $Q$ has the ${\mathcal Q}_q$ distribution, then
\[
E \bigl[qf(Q+1) \bigr]=E \bigl[\bigl(q^{2Q}-1\bigr) f(Q)
\bigr]
\]
for all functions $f$ for which these expectations exist.

If $Q_n$ has the ${\mathcal Q}_{q,n}$ distribution, then
%
\begin{equation}
\label{QnqcharHerm} E \bigl[ \bigl(q - (-1)^{n-Q_n}q^{Q_n-n+1} \bigr)
f(Q_n+1) \bigr]=E \bigl[\bigl(q^{2 Q_n}-1\bigr)
f(Q_n) \bigr]
\end{equation}
for all functions $f$ for which these expectations exist.
\end{lemma}

\begin{pf} For the first assertion, one calculates from (\ref{pkhermitian})
that
\[
\frac{p_{k-1}}{p_k} = \frac{q^{2k}-1}{q}\qquad\mbox{for all $k \in
\mathbb{N}_0$.}
\]
Taking $a(k)=q$ and $b(k)=q^{2k}-1$ in Lemma~\ref{abcharlem}, the
first assertion follows.

For the second assertion, one calculates that
\[
\frac{p_{k-1,n}}{p_{k,n}} = \frac{N(n,n-k+1)}{N(n,n-k)} = \frac
{q^{2k}-1}{q-(-1)^{n-k+1}q^{k-n}}.
\]
Taking $a(k) =q- (-1)^{n-k+1}q^{k-n}$ and $b(k)=q^{2k}-1$ in Lemma~\ref{abcharlem}, and noting $a(n+1)=0$, the second assertion follows.
\end{pf}

Next, we handle the moment $E[q^{Q_n}]$. Unlike all our other moment
computations where we obtain equality, here we derive an upper bound.

\begin{lemma} \label{boundmom} If $Q_n$ has the ${\mathcal Q}_{q,n}$
distribution, then
\[
E\bigl(q^{Q_n}\bigr) \leq2 + q^{-n}.
\]
\end{lemma}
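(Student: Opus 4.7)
\medskip

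My plan is to follow the same strategy used in Lemmas \ref{qtoX}, \ref{mom}, \ref{mom-symmetric-even}, and \ref{mom3}: apply the characterization (\ref{Qnq:charHerm}) with $f(x) = q^{kx}$ to derive a recursion on the moments $c_k := E[q^{k Q_n}]$. The new wrinkle is the sign $(-1)^{n-Q_n}$ appearing in (\ref{Qnq:charHerm}), so the recursion will also involve the signed moments $d_k := E[(-1)^{Q_n} q^{k Q_n}]$. This coupling is why the lemma states only an inequality rather than an identity.

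Concretely, substituting $f(x) = q^{kx}$ into (\ref{Qnq:charHerm}) and using $(-1)^{n-Q_n} = (-1)^n (-1)^{Q_n}$, the left side becomes
\beas
q^{k+1} c_k \;-\; (-1)^n\, q^{k-n+1} d_{k+1},
\enas
while the right side equals $c_{k+2} - c_k$. This yields the recursion
\beas
c_{k+2} \;=\; (q^{k+1}+1)\, c_k \;-\; (-1)^n\, q^{k-n+1}\, d_{k+1}.
\enas
Setting $k=-1$ and using $c_0 = 1$ implicitly through $c_{-1}$ and $d_0$ (neither of which is determined by $c_0 = 1$ alone), one obtains
\beas
c_1 \;=\; 2\, c_{-1} \;-\; (-1)^n\, q^{-n}\, d_0.
\enas

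The final step is to bound the two unknown quantities on the right by crude pointwise estimates. Since $Q_n$ takes values in $\{0,1,\ldots,n\}$, we have $q^{-Q_n} \le 1$, hence $c_{-1} = E[q^{-Q_n}] \le 1$. Similarly $|d_0| = |E[(-1)^{Q_n}]| \le 1$, so $-(-1)^n q^{-n} d_0 \le q^{-n}$. Combining these gives $E[q^{Q_n}] = c_1 \le 2 + q^{-n}$, as desired.

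I do not anticipate any real obstacle here: the only subtlety is recognizing that one cannot solve for $c_1$ exactly (as in the earlier moment lemmas) because of the sign factor, which forces the appearance of $d_0$ in the $k = -1$ instance of the recursion. Once that is noted, the trivial bounds $c_{-1} \le 1$ and $|d_0| \le 1$ close the argument immediately, and explain why the conclusion is an inequality rather than an equality.
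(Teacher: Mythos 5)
Your proof is correct and essentially the same as the paper's: after the substitution $k=-1$ your recursion reduces to exactly the identity the paper gets by directly plugging $f(x)=q^{-x}$ into (\ref{Qnq:charHerm}), namely $E[q^{Q_n}] = 2E[q^{-Q_n}] - (-1)^n q^{-n} E[(-1)^{Q_n}]$, followed by the same trivial bounds $E[q^{-Q_n}]\le 1$ and $|E[(-1)^{Q_n}]|\le 1$. The general-$k$ detour and the explicit signed moments $d_k$ are a nice way to explain \emph{why} only an inequality is obtainable, but they do not change the underlying argument.
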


\begin{pf} Setting $f(x)=q^{-x}$ in (\ref{QnqcharHerm}) implies that
\[
E \bigl[ q^{-Q_n} - (-1)^{n-Q_n}q^{-n} \bigr] = E
\bigl[q^{Q_n} - q^{-Q_n}\bigr].
\]
Thus,
\[
E\bigl[q^{Q_n}\bigr] = E \bigl[ 2q^{-Q_n} -
(-1)^{n-Q_n}q^{-n} \bigr] \leq2 + q^{-n}.
\]\upqed
\end{pf}

In the remainder of this section, we consider the Stein equation (\ref
{receq}) for the target distribution ${\mathcal Q}_q$ with
\[
a(k)=q \quad\mbox{and} \quad b(k)=q^{2k}-1
\]
and for $A \subset\mathbb{N}_0$ we let $f_A$ denote the solution
(\ref{steinsol}) when $h(k)={\mathbf1}(k \in A)$.
Our next task is to provide a bound on $f_A$. In the following, we will
apply the identity
%
\begin{equation}
\label{oddeven=oddproduct} \prod_{i\; \mathrm{odd}} \bigl(1-1/q^i
\bigr) \prod_{i\; \mathrm{even}} \bigl(1+1/q^i\bigr) =
\prod_{i\; \mathrm{odd}} \frac{1}{(1+1/q^i)},
\end{equation}
which holds since
\[
\prod_{i\; \mathrm{odd}} \bigl(1-1/q^i\bigr) =
\frac{\prod_i (1-1/q^i)}{\prod_i
(1-1/q^{2i})} = \prod_i \frac{1}{(1+1/q^i)}.
\]

\begin{lemma} \label{normbounds} The function $f_A$ satisfies
\[
\sup_{A \subset\mathbb{N}_0} \bigl|f_A(1)\bigr| \leq\frac{1.1}{q^2}
\quad\mbox{and} \quad\sup_{A \subset\mathbb{N}_0, k \ge2}\bigl|f_A(k)\bigr| \le
\frac
{1.8}{q^4} \qquad\mbox{for all $q \ge2$.}
\]
\end{lemma}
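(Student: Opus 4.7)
The plan is to mirror the pattern of Lemmas \ref{f.bound}, \ref{sol:nzd:symmetric}, and \ref{sol:bnds-even:symmetric}: use Lemma \ref{fA1:bound:lem} to handle $|f_A(1)|$, and use the inequality \eqref{prodU.Uc.bound} together with the explicit point probabilities \eqref{pk:hermitian} to handle $|f_A(k+1)|$ for $k \geq 1$.

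For $|f_A(1)|$, Lemma \ref{fA1:bound:lem} yields $|f_A(1)| \leq (1-p_0)/q$, and from \eqref{pk:hermitian} we have $1/p_0 = \prod_{i \text{ odd}}(1+1/q^i)$. I would split off the $i=1$ factor and bound the remaining tail using $(1+x)(1-x) \leq 1$ together with the estimate $\prod_{i \geq 3, i \text{ odd}}(1-1/q^i) \geq 1-2/q^3$ from Lemma \ref{bound}, obtaining
\[
\frac{1}{p_0} \leq \left(1+\frac{1}{q}\right)\cdot\frac{1}{1-2/q^3}, \qquad p_0 \geq \frac{q^3-2}{(q+1)q^2}.
\]
This gives $|f_A(1)| \leq (q^2+2)/((q+1)q^3)$, and the desired bound $1.1/q^2$ then reduces to $0.1\,q^2 + 1.1\,q \geq 2$, which holds already at $q=2$.

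For $|f_A(k+1)|$ with $k \geq 1$, substituting \eqref{pk:hermitian} into \eqref{prodU.Uc.bound}, the constant $\prod_{i \text{ odd}}(1+1/q^i)^{-1}$ cancels in the ratio $P(Q \in U_k^c)/p_k$, leaving
\[
|f_A(k+1)| \leq \frac{q^{k^2-1}}{\prod_{i=k+1}^\infty(1-1/q^{2i})} \sum_{l=k+1}^\infty \frac{1}{q^{l^2}}.
\]
I would factor $q^{-(k+1)^2}$ out of the sum and use the identity $(k+1+m)^2 - (k+1)^2 = m(2k+2+m) \geq m(2k+3)$ for $m \geq 0$ to bound the remaining series by $1/(1-q^{-(2k+3)})$, while \eqref{proda.ge.suma} bounds the tail product below by $1 - 1/(q^{2k}(q^2-1))$. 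For $k \geq 1$ and $q \geq 2$ these two factors are at least $31/32$ and $11/12$ respectively, so the estimate simplifies to $(384/341)\,q^{-(2k+2)}$; since $q^{2k+2} \geq q^4$ this is comfortably below $1.8/q^4$.

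The main obstacle is respecting the numerical constants at $q=2$, which is the tightest case. Splitting off the $i=1$ factor for $f_A(1)$ is essential: a cruder estimate using only the bound $\prod_{i \text{ odd}}(1-1/q^i) \geq 1-1/q-1/q^3$ from Lemma \ref{bound} together with $(1+x)(1-x) \leq 1$ would give $|f_A(1)| \leq (1/q + 1/q^3)/q$, yielding $5/16$ at $q=2$, which exceeds $1.1/4$. No comparable delicacy arises in the $k \geq 1$ regime, where after combining the geometric-sum and tail-product estimates the residual numerical constant $384/341 \approx 1.13$ sits well below $1.8$.
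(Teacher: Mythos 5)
Your proof is correct and follows the same skeleton as the paper's (Lemma \ref{fA1:bound:lem} for $|f_A(1)|$, inequality \eqref{prodU.Uc.bound} combined with \eqref{pk:hermitian} for $k \geq 1$), but the numerical estimates diverge in two places. For $|f_A(1)|$, the paper invokes the identity \eqref{odd.even=odd:product} to rewrite $p_0 = \prod_{i\ odd}(1-1/q^i)\prod_{i\ even}(1+1/q^i)$ and then bounds from below using the $1-1/q-1/q^3$ estimate on odd indices and a single even factor $1+1/q^2$, arriving at $p_0 \geq 1 - 1/q - 1/q^5$; you instead work directly with $1/p_0 = \prod_{i\ odd}(1+1/q^i)$, peel off the $i=1$ factor, and apply $(1+x) \le (1-x)^{-1}$ and the $1-2/q^3$ bound. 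Both routes give a clean closed form for $1-p_0$ that comfortably yields $1.1/q$. For $k \geq 1$, after the same reduction to $q^{k^2-1}\bigl(\prod_{j\ge k+1}(1-q^{-2j})\bigr)^{-1}\sum_{l\ge k+1}q^{-l^2}$, the paper factors out $q^{-(k+1)^2}$ and bounds the residual sum by $s_q = \sum_{l\ge 0}q^{-l^2}\le 1.625$ (with a separate constant $d_q$ for the product), giving roughly $1.77/q^4$; you instead bound the residual sum by the geometric series $\sum_{m\ge 0}q^{-m(2k+3)} = (1-q^{-(2k+3)})^{-1}$ and absorb the product via \eqref{proda.ge.suma}, giving the sharper $\frac{384}{341}\approx 1.13$ in place of $d_qs_q$. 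Both verify the stated $1.8/q^4$; your geometric-series estimate is the tighter of the two at $q=2$. The only minor point worth making explicit in a write-up: in your reduction you should note, as the paper does, that $\prod_{i=k+1}^l(1-q^{-2i}) \geq \prod_{i=k+1}^\infty(1-q^{-2i})$ is what lets you pull the product out of the sum.
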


\begin{pf} By Lemma~\ref{fA1boundlem},
\[
\bigl|f_A(1)\bigr| \leq\frac{P(Q \geq1)}{q} = \frac{1-p_0}{q}.
\]
By (\ref{pkhermitian}), (\ref{oddeven=oddproduct}) and the third
claim of Lemma~\ref{bound},
\begin{eqnarray*}
p_0 & = & \prod_{i\; \mathrm{odd}}
\bigl(1-1/q^i\bigr) \prod_{i\; \mathrm{even}}
\bigl(1+1/q^i\bigr)
\\
& \geq& \bigl(1-1/q-1/q^3\bigr) \bigl(1+1/q^2\bigr)
\\
& \geq& 1-1/q-1/q^5.
\end{eqnarray*}
Thus, $1-p_0 \leq1/q + 1/q^5 \leq1.1/q$, and hence $|f_A(1)| \leq
1.1/q^2$, for all $q \ge2$.

For $k \geq1$, by (\ref{prodUUcbound}),
%
\begin{eqnarray}\label{Hermitianfboundsum}
\bigl|f_A(k+1)\bigr| & \leq& \frac{P(Q \geq k+1)}{q p_k}
\nonumber
\\
&=& q^{k^2-1}\prod_{i=1}^k
\bigl(1-1/q^{2i}\bigr)\sum_{l=k+1}^\infty
\frac{1}{q^{l^2}\prod_{j=1}^l (1-1/q^{2j})}
\\
& = & q^{k^2-1} \sum_{l=k+1}^{\infty}
\frac{1}{q^{l^2} \prod_{j=k+1}^l (1-1/q^{2j})}
\nonumber
\\
& \leq& \frac{q^{k^2-1}}{\prod_{j=k+1}^{\infty} (1-1/q^{2j})} \sum
_{l=k+1}^{\infty}
\frac{1}{q^{l^2}}.\nonumber
\end{eqnarray}
Since $k \geq1$, using (\ref{prodagesuma}) we have that
%
\begin{eqnarray}
\label{hermitedefdq} \frac{1}{\prod_{j=k+1}^{\infty} (1-1/q^{2j})} \le
\frac{1}{\prod_{j=2}^{\infty} (1-1/q^{2j})}
\le\frac{1}{1-\sum_{j=2}^\infty q^{-2j}} = d_q
\nonumber\\[-8pt]\\[-8pt]
\eqntext{\displaystyle\mbox{where }
d_q=\frac{1}{1-(1/(q^4-q^2))}.}
\end{eqnarray}
Thus, from (\ref{Hermitianfboundsum}),
%
\begin{eqnarray}\label{Hermitedefsq}
\bigl|f_A(k+1)\bigr| &\le& d_q \cdot q^{k^2-1}\sum
_{l=k+1}^{\infty} \frac
{1}{q^{l^2}}
\nonumber
\\
&=& d_q \cdot q^{k^2-1}\sum_{l=0}^{\infty}
\frac
{1}{q^{(k+1+l)^2}}
\nonumber
\\
&\le& d_q \frac{q^{k^2-1}}{q^{(k+1)^2}}\sum_{l=0}^{\infty}
\frac
{1}{q^{l^2}}
\\
& = & \frac{d_q}{q^{2k+2}}\sum_{l=0}^{\infty}
\frac
{1}{q^{l^2}}
\nonumber
\\
&\le& \frac{d_q s_q}{q^4} \qquad\mbox{where } s_q=\sum
_{l=0}^{\infty} \frac{1}{q^{l^2}},\nonumber
\end{eqnarray}
using $k \ge1$ in the final inequality. Now using that $d_q$ and $s_q$
are decreasing for $q \ge2$, and that
\[
\sum_{l=0}^{\infty} \frac{1}{q^{l^2}} \le1+
\frac{1}{2}+\sum_{l=2}^\infty
\frac{1}{2^{l+2}}=1.625
\]
we obtain the second claim of the lemma.
\end{pf}

Now we present the proof of the main result of this section, Theorem
\ref{mainlast}.

\begin{pf*}{Proof of Theorem~\ref{mainlast}}
We first compute a lower bound for the case where $n$ is
odd. From (\ref{pknhermitian}), we have
\begin{eqnarray*}
p_{0,n} &=& (1-1/q) \bigl(1-1/q^3\bigr) \cdots
\bigl(1-1/q^n\bigr)
\\
&&{}\times \bigl(1+1/q^2\bigr)
\bigl(1+1/q^4\bigr) \cdots\bigl(1+1/q^{n-1}\bigr).
\end{eqnarray*}
By (\ref{pkhermitian}) and (\ref{oddeven=oddproduct}),
\begin{eqnarray*}
p_0 &\geq& (1-1/q) \bigl(1-1/q^3\bigr) \cdots
\bigl(1-1/q^{n+2}\bigr)
\\
&&{}\times
\bigl(1+1/q^2\bigr) \bigl(1+1/q^4\bigr) \cdots
\bigl(1+1/q^{n+1}\bigr)
\\
&=&\bigl(1+1/q^{n+1}\bigr) \bigl(1-1/q^{n+2}\bigr)
p_{0,n}.
\end{eqnarray*}
Thus,
%
\begin{eqnarray}\label{hermitelowerodd}
p_0 - p_{0,n} & \geq& (1-1/q) \bigl(1-1/q^3
\bigr) \cdots\bigl(1-1/q^n\bigr)
\nonumber
\\
& &{} \times\bigl(1+1/q^2\bigr) \bigl(1+1/q^4\bigr)
\cdots\bigl(1+1/q^{n-1}\bigr)
\nonumber
\\
& &{} \times\bigl[ \bigl(1+1/q^{n+1}\bigr) \bigl(1-1/q^{n+2}
\bigr) -1 \bigr]
\nonumber
\\
& \geq& (1-1/q) \bigl(1-1/q^3\bigr) \cdots\bigl(1-1/q^n
\bigr)
\nonumber\\[-8pt]\\[-8pt]
& &{} \times\bigl[ \bigl(1+1/q^{n+1}\bigr) \bigl(1-1/q^{n+2}
\bigr) -1 \bigr]
\nonumber
\\
& \geq& (1-1/q) \bigl(1-1/q^3\bigr) \cdots\bigl(1-1/q^n
\bigr) \biggl[ \frac
{(1-1/q-1/q^3)}{q^{n+1}} \biggr]
\nonumber
\\
& \geq& \bigl(1-1/q-1/q^3\bigr)^2 / q^{n+1}\nonumber
\\
& \geq&0.14 / q^{n+1},
\nonumber
\end{eqnarray}
where the fourth inequality used the third claim of Lemma~\ref{bound}.
Thus, the total variation
distance between ${\mathcal Q}_{q,n}$ and ${\mathcal Q}_q$ is at least
$\frac{1}{2} [ p_0-p_{0,n} ] \ge0.07 /q^{n+1}$.

Now we compute a lower bound for $n$ even. From (\ref{pknhermitian}),
\begin{eqnarray*}
p_{0,n} &=& (1-1/q) \bigl(1-1/q^3\bigr) \cdots
\bigl(1-1/q^{n-1}\bigr)
\\
&&{}\times \bigl(1+1/q^2\bigr)
\bigl(1+1/q^4\bigr) \cdots\bigl(1+1/q^{n}\bigr)
\end{eqnarray*}
and by (\ref{pkhermitian}) and (\ref{oddeven=oddproduct}),
\begin{eqnarray*}
p_0 &\leq& (1-1/q) \bigl(1-1/q^3\bigr) \cdots
\bigl(1-1/q^{n+1}\bigr)
\\
&&{} \times\bigl(1+1/q^2\bigr) \bigl(1+1/q^4\bigr) \cdots
\bigl(1+1/q^{n+2}\bigr)
\\
&=& p_{0,n} \bigl(1-1/q^{n+1}\bigr) \bigl(1+1/q^{n+2}
\bigr).
\end{eqnarray*}
Thus,
%
\begin{eqnarray} \label{hermitelowereven}
p_{0,n} - p_0 & \geq& (1-1/q) \bigl(1-1/q^3
\bigr) \cdots\bigl(1-1/q^{n-1}\bigr)
\nonumber
\\
& &{} \times\bigl(1+1/q^2\bigr) \bigl(1+1/q^4\bigr)
\cdots\bigl(1+1/q^{n}\bigr)
\nonumber
\\
& &{} \times\bigl[1-\bigl(1-1/q^{n+1}\bigr) \bigl(1+1/q^{n+2}
\bigr)\bigr]
\nonumber
\\
& \geq& (1-1/q) \bigl(1-1/q^3\bigr) \cdots\bigl(1-1/q^{n-1}
\bigr)\nonumber
\\
& &{} \times\bigl[1-\bigl(1-1/q^{n+1}\bigr) \bigl(1+1/q^{n+2}
\bigr)\bigr]
\\
& \geq& \frac{(1-1/q)}{q^{n+1}} \prod_{i\; \mathrm{odd}}
\bigl(1-1/q^i\bigr)
\nonumber
\\
& \geq& \frac{(1-1/q)(1-1/q-1/q^3)}{q^{n+1}}\nonumber
\\
& \geq&0.18/ q^{n+1},
\nonumber
\end{eqnarray}
where the fourth inequality used the third claim of Lemma~\ref{bound}.
Thus, the total variation distance\vspace*{1pt} between ${\mathcal Q}_{q,n}$ and
${\mathcal Q}_q$ is at least $\frac{1}{2} [ p_{0,n}-p_0 ] \ge0.09 /q^{n+1}$.

For the upper bound, arguing as in the proof of Theorem~\ref{symmetricnonzerodiagthm},
%
\begin{eqnarray}\label{hermitefinalbound}
&& \bigl|P(Q_n \in A) - P(Q \in A)\bigr|
\nonumber
\\
&&\qquad  =  \bigl|E\bigl[h_A(Q_n)\bigr] - Q_q
h_A \bigr|
\nonumber
\\
&&\qquad =  \bigl|E\bigl[q f_A(Q_n+1) - \bigl(q^{2Q_n}-1
\bigr) f_A(Q_n)\bigr]\bigr|
\nonumber
\\
&&\qquad =  \bigl|E\bigl[(-1)^{n-Q_n} q^{-n+Q_n+1} f_A(Q_n+1)
\bigr] \bigr|
\nonumber
\\
&&\qquad \le q^{-n+1} \bigl|f_A(1)\bigr|P(Q_n=0)
\nonumber\\[-8pt]\\[-8pt]
&&\quad\qquad{}+ E
\bigl[q^{-n+Q_n+1} \bigl|f_A(Q_n+1)\bigr|{\mathbf
1}(Q_n \ge1)\bigr]
\nonumber
\\
&&\qquad \le q^{-n+1} \bigl|f_A(1)\bigr|+ q^{-n+1}E
\bigl[q^{Q_n}\bigr] \sup_{k \ge2} \bigl|f_A(k)\bigr|
\nonumber
\\
&&\qquad \le q^{-n+1} \frac{1.1}{q^2}+ q^{-n+1} \bigl( 2+
q^{-n} \bigr) \frac{1.8}{q^4}
\nonumber
\\
&&\qquad \le q^{-(n+1)} \bigl(1.1+3.6q^{-2} +1.8q^{-3}
\bigr)\nonumber
\\
&&\qquad \le 2.3/q^{n+1}
\nonumber
\end{eqnarray}
for $n \ge1$. The third inequality used Lemmas~\ref{boundmom} and
\ref{normbounds}.
\end{pf*}

\begin{remark}
The distribution $p_{k,n}$ of (\ref{pknhermitian}) holds for $q \ge
3$. Over this range, the bounds of Theorem~\ref{mainlast} may be
slightly improved by applying (\ref{hermitedefdq}) and (\ref
{Hermitedefsq}) to replace 1.8 in Lemma~\ref{normbounds} by 1.4, and
then using this value in (\ref{hermitefinalbound}). One may
similarly improve the lower bound by replacing 0.14 by 0.38 in (\ref
{hermitelowerodd}), and 0.18 by~0.41 in (\ref{hermitelowereven}),
resulting in
\[
\frac{0.19}{q^{n+1}} \leq\|{\mathcal Q}_{q,n}-{\mathcal
Q}_q\|_{\mathrm{TV}} \leq\frac{1.5}{q^{n+1}} \qquad\mbox{for all
$q \ge3$.}
\]
\end{remark}

\setcounter{prop}{0}
\begin{appendix}\label{app}
\section*{Appendix}
The main purpose of this appendix is to give an algebraic proof of
Lemma~\ref{qtoX} in the special case that $m=0$. The proof assumes
familiarity with rational canonical forms of matrices (i.e., the
theory of Jordan forms over finite fields), and with cycle index
generating functions. Background on these topics can be found in \cite
{F1} or \cite{St}, or in the survey \cite{F2}.

\begin{pf*}{Proof of Lemma~\ref{qtoX} when $m=0$}
The sought equation is
%
\begin{equation}
\label{equ} \sum_{k=0}^n
q^k p_{k,n} = 2 - 1/q^n.
\end{equation}
From the expression for $p_{k,n}$ in (\ref{defQqnrect}) specialized
to the case $m=0$, it is clear that if one multiplies (\ref{equ}) by
$q^t (1-1/q) \cdots(1-1/q^n)$
where $t$ is sufficiently large as a function of $n$, then both sides
become polynomials in $q$. Since polynomials in $q$ agreeing
for infinitely many values of $q$ are equal, it is enough to prove the
result for infinitely many values of $q$, so we demonstrate it for $q$
a prime power.

Let $\operatorname{Mat}(n,q)$ be the collection of all $n \times n$ matrices with
entries in $\mathbb{F}_q$ and $M \in \operatorname{Mat}(n,q)$. Then $n$ minus the
rank of $M$ is equal to
$l(\lambda_z(M))$, the number of parts in the partition corresponding
to the degree one polynomial $z$ in the rational canonical
form of $M$.
%
\begin{equation}
\label{exp} E\bigl(q^{Q_n}\bigr) = \frac{1}{q^{n^2}} \sum
_{M
\in \operatorname{Mat}(n,q)} q^{l(\lambda_z(M))},
\end{equation}
where $\operatorname{Mat}(n,q)$ denotes the set of $n \times n$ matrices over the
finite field $\mathbb{F}_q$.

From the cycle index for $\operatorname{Mat}(n,q)$ (Lemma 1 of \cite{St}), it follows that
%
\begin{eqnarray}\label{two}
&& 1 + \sum_{n \geq1} \frac{u^n}{|\operatorname{GL}(n,q)|}
\sum_{M \in \operatorname{Mat}(n,q)} q^{l(\lambda_z(M))}
\nonumber\\[-8pt]\\[-8pt]
&&\qquad = \biggl[ \sum
_{\lambda} \frac{q^{l(\lambda)} u^{|\lambda|}
}{c_{\mathrm{GL},z} (\lambda)} \biggr] \prod
_{\phi\neq z} \sum_{\lambda}
\frac{u^{|\lambda| \deg(\phi
)}}{c_{\mathrm{GL},\phi} (\lambda)}.\nonumber
\end{eqnarray}
Here, $\lambda$ ranges over all partitions of all natural numbers, and
$l(\lambda)$
is the number of parts of $\lambda$. The quantity $c_{\mathrm{GL},\phi
}(\lambda)$ is a certain function of
$\lambda, \phi$ which depends on the polynomial $\phi$ only through
its degree. The product is over all monic, irreducible
polynomials $\phi$ over $\mathbb{F}_q$ other than $\phi= z$.

From the cycle index for $\operatorname{GL}(n,q)$ (Lemma 1 of \cite{St}), it follows that
%
\begin{equation}
\label{three} \frac{1}{1-u} = 1 + \sum_{n \geq1}
\frac{u^n}{|\operatorname{GL}(n,q)|} \sum_{\alpha\in \operatorname{GL}(n,q)} 1 = \prod
_{\phi\neq z} \sum_{\lambda}
\frac{u^{|\lambda| \deg(\phi
)}}{c_{\mathrm{GL},\phi} (\lambda)}.
\end{equation}

Summarizing, it follows from (\ref{two}) and (\ref{three}) that
%
\begin{equation}
\label{four} 1 + \sum_{n \geq1} \frac
{u^n}{|\operatorname{GL}(n,q)|}
\sum_{M \in \operatorname{Mat}(n,q)} q^{l(\lambda_z(M))} = \frac{1}{1-u}
\sum_{\lambda} \frac{q^{l(\lambda)} u^{|\lambda
|}}{c_{\mathrm{GL},z}(\lambda)}.
\end{equation}

The next step is to compute
\[
\sum_{\lambda} \frac{q^{l(\lambda)} u^{|\lambda
|}}{c_{\mathrm{GL},z}(\lambda)} = \sum
_{\lambda} \frac{q^{l(\lambda)}
u^{|\lambda|}}{c_{\mathrm{GL},z-1}(\lambda)}.
\]
This equality holds because $c_{\mathrm{GL},\phi}(\lambda)$ depends on the
polynomial $\phi$ only
through its degree.
From the cycle index of $\operatorname{GL}(n,q)$, it follows that
\begin{eqnarray*}
& & 1 + \sum_{n \geq1} \frac{u^n}{|\operatorname{GL}(n,q)|} \sum
_{\alpha\in
\operatorname{GL}(n,q)} q^{l(\lambda_{z-1}(\alpha))}
\\
&&\qquad =  \sum_{\lambda} \frac{q^{l(\lambda)} u^{|\lambda
|}}{c_{\mathrm{GL},z-1}(\lambda)} \prod
_{\phi\neq z,z-1} \sum_{\lambda}
\frac{u^{|\lambda| \deg(\phi)}}{c_{\mathrm{GL},\phi}(\lambda)}
\\
&&\qquad =  \frac{ \sum_{\lambda} (({q^{l(\lambda)} u^{|\lambda
|}})/({c_{\mathrm{GL},z-1}(\lambda)})) } {
\sum_{\lambda} ({u^{|\lambda| }}/({c_{\mathrm{GL},z-1}(\lambda)})) } \prod_{\phi
\neq z} \sum_{\lambda} \frac{u^{|\lambda| \deg(\phi
)}}{c_{\mathrm{GL},\phi}(\lambda)}
\\
&&\qquad =  \frac{1}{1-u} \frac{ \sum_{\lambda} (({q^{l(\lambda)}
u^{|\lambda|}})/({c_{\mathrm{GL},z-1}(\lambda)})) } {
\sum_{\lambda} ({u^{|\lambda|}}/({c_{\mathrm{GL},z-1}(\lambda)})) }
\\
&&\qquad =  \frac{\prod_{i \geq1} (1-u/q^i)}{1-u} \sum_{\lambda}
\frac
{q^{l(\lambda)} u^{|\lambda|}}{c_{\mathrm{GL},z-1}(\lambda)}.
\end{eqnarray*}
The third equality used (\ref{three}) and the final equality is from
Lemma 6 of \cite{St}
and page 19 of \cite{A}.

Next, we can use group theory to find an alternate expression for
\[
1 + \sum_{n \geq1} \frac{u^n}{|\operatorname{GL}(n,q)|} \sum
_{\alpha\in \operatorname{GL}(n,q)} q^{l(\lambda_{z-1}(\alpha))}.
\]
Indeed, by the theory of rational canonical forms, $q^{l(\lambda
_{z-1}(\alpha))}$ is the number of fixed points of $\alpha$ in its
action on the underlying $n$ dimensional vector space $V$.
By Burnside's lemma (page 95 of \cite{VW}), the average number of
fixed points of a finite group acting on a finite set is the number of
orbits of the action on the set. For $\operatorname{GL}(n,q)$ acting on $V$, there are
two such orbits, consisting of the zero vector and the set of nonzero
vectors. Thus,
\[
1 + \sum_{n \geq1} \frac{u^n}{|\operatorname{GL}(n,q)|} \sum
_{\alpha\in \operatorname{GL}(n,q)} q^{l(\lambda_{z-1}(\alpha))} = 1 + \sum
_{n \geq1} 2u^n = \frac{1+u}{1-u}.
\]

Comparing the final equations of the previous two paragraphs gives that
%
\begin{equation}
\label{six} \sum_{\lambda} \frac{q^{l(\lambda)} u^{|\lambda
|}}{c_{\mathrm{GL},z}(\lambda)} =
\frac{1+u}{\prod_{i \geq1} (1-u/q^i)}.
\end{equation}
It follows from (\ref{four}) and (\ref{six}) that
\[
1 + \sum_{n \geq1} \frac{u^n}{|\operatorname{GL}(n,q)|} \sum
_{M \in \operatorname{Mat}(n,q)} q^{l(\lambda_z(M))} = \frac{1+u}{1-u} \prod
_{i \geq1} \frac
{1}{1-u/q^i}.
\]
Thus, by (\ref{exp}), $E(q^{Q_n})$ is $\frac{|\operatorname{GL}(n,q)|}{q^{n^2}}$
multiplied by the coefficient of $u^n$ in
\[
\frac{1+u}{1-u} \prod_{i \geq1}
\frac{1}{1-u/q^i}.
\]
From page 19 of \cite{A}, the coefficient of $u^n$ in
\[
\frac{1}{1-u} \prod_{i \geq1} \frac{1}{1-u/q^i}
\]
is equal to $[(1-1/q)(1-1/q^2) \cdots(1-1/q^n)]^{-1}$. Thus,
\begin{eqnarray*}
E\bigl(q^{Q_n}\bigr)
&=&  \frac{|\operatorname{GL}(n,q)|}{q^{n^2}} \biggl[ \frac{1}{(1-1/q) \cdots
(1-1/q^n)} + \frac{1}{(1-1/q) \cdots(1-1/q^{n-1})}
\biggr]
\\
&=&  2 - \frac{1}{q^n},
\end{eqnarray*}
where the last equality used that $|\operatorname{GL}(n,q)| = q^{n^2} (1-1/q) \cdots
(1-1/q^n)$.
\end{pf*}

We close this section with two remarks about the distribution
${\mathcal Q}_{q,n}$ in (\ref{defQqnrect}) (for general $m$) from
the \hyperref[intro]{Introduction}.

\begin{itemize}
\item From \cite{Be}, there is a natural Markov chain on $\{0,1,\ldots,n\}$ which has ${\mathcal Q}_{q,n}$ as its stationary distribution.
This chain
has transition probabilities
\begin{eqnarray*}
M(i,i+1) &=& \frac{q^{n-i-1} (q^{n-i} - 1)}{(q^n-1) (q^{n+m}-1)},
\\
M(i,i-1) &=& \frac{(q^n-q^{n-i})(q^{n+m}-q^{n-i})}{(q^n-1)(q^{n+m}-1)},
\\
M(i,i) &=& 1 - M(i,i-1) - M(i,i+1).
\end{eqnarray*}
This Markov chain describes how the rank of a matrix evolves by adding
a uniformly chosen rank one matrix at each step.

\item The following known lemma gives a formula for the chance that a
random $k \times n$ matrix with entries from $\mathbb{F}_q$ has rank
$r$. For its statement, we let
\[
{\lleft[\matrix{n
\cr
m} \rright]_q} = \frac{(q^n-1)(q^{n-1}-1)
\cdots(q^{n-m+1}-1)}{(q^m-1)
(q^{m-1}-1) \cdots
(q-1)}
\]
be the $q$-binomial coefficient.
\end{itemize}

\begin{lemma}[(\cite{VW}, page 338)]\label{first}
The chance that a
random $k \times n$ matrix with entries from $\mathbb{F}_q$ has rank
$r$ is equal to
%
\begin{equation}
\label{one} \frac{1}{q^{kn}} {\lleft[\matrix{n
\cr
r}
\rright]_q} \sum_{l=0}^r
(-1)^{r-l} {\lleft[\matrix{r
\cr
l} \rright]_q}
q^{kl+
{r-l \choose2}}.
\end{equation}
\end{lemma}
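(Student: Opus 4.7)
The plan is to reduce \eqref{one} to two classical ingredients: a direct enumeration of rank-$r$ matrices by their kernel, followed by a $q$-binomial expansion identity. First I would classify $k \times n$ matrices $M$ of rank $r$ by their kernel. A linear map $\mathbb{F}_q^n \to \mathbb{F}_q^k$ of rank $r$ has an $(n-r)$-dimensional kernel $K$, and the number of such subspaces of $\mathbb{F}_q^n$ is $\qb{n}{n-r} = \qb{n}{r}$. For each fixed $K$, the map factors through an injective linear map $\mathbb{F}_q^n/K \cong \mathbb{F}_q^r \hookrightarrow \mathbb{F}_q^k$, and the number of injective linear maps from $\mathbb{F}_q^r$ to $\mathbb{F}_q^k$ is $\prod_{i=0}^{r-1}(q^k-q^i)$ (choose the images of a basis one at a time, avoiding the span of those already chosen). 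Dividing by the total count $q^{kn}$ gives
\[ P(\mbox{rank}(M)=r) = \frac{1}{q^{kn}} \qb{n}{r} \prod_{i=0}^{r-1}(q^k-q^i). \]

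Next I would apply the $q$-analogue of the falling factorial expansion,
\[ \prod_{i=0}^{r-1}(x-q^i) = \sum_{l=0}^r (-1)^{r-l} \qb{r}{l} q^{\binom{r-l}{2}} x^l, \]
specialized at $x = q^k$, to get $\prod_{i=0}^{r-1}(q^k-q^i) = \sum_{l=0}^r (-1)^{r-l} \qb{r}{l} q^{kl+\binom{r-l}{2}}$. Substituting this into the display above yields the right-hand side of \eqref{one}. The underlying $q$-identity itself admits a short induction on $r$ using the Pascal-type recurrence $\qb{r+1}{l} = \qb{r}{l-1} + q^l \qb{r}{l}$, or, more conceptually, follows from Möbius inversion on the lattice of subspaces of $\mathbb{F}_q^r$ applied to the relation $q^{kr} = \sum_{l=0}^r \qb{r}{l} \prod_{i=0}^{l-1}(q^k-q^i)$ obtained by classifying all $k \times r$ matrices according to their rank.

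The main obstacle, if one can call it that, is simply bookkeeping with the exponent $\binom{r-l}{2}$ in the $q$-binomial expansion; this identity is entirely standard and could alternatively be cited directly from \cite{A}. No new techniques are required beyond those already developed in the paper.
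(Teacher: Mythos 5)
Your proof is correct. The paper itself does not prove Lemma \ref{first}; it simply cites van Lint and Wilson, and Lemma \ref{first} is then used (following a suggestion of Stanton) to rederive the product formula (\ref{def:Qqn.rect}) via the $q$-binomial theorem. So there is no proof in the paper to compare against, only a citation.

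What you supply is the standard argument, and it is complete and clean. Classifying $k \times n$ matrices of rank $r$ by their $(n-r)$-dimensional kernel gives $\qb{n}{r}$ choices of kernel, and for each kernel the map factors through an injection $\mathbb{F}_q^r \hookrightarrow \mathbb{F}_q^k$, of which there are $\prod_{i=0}^{r-1}(q^k-q^i)$; dividing by $q^{kn}$ gives $P(\mathrm{rank}=r)=q^{-kn}\qb{n}{r}\prod_{i=0}^{r-1}(q^k-q^i)$. The expansion $\prod_{i=0}^{r-1}(x-q^i)=\sum_{l=0}^r(-1)^{r-l}\qb{r}{l}q^{\binom{r-l}{2}}x^l$ is exactly the elementary-symmetric-function identity $e_j(1,q,\ldots,q^{r-1})=q^{\binom{j}{2}}\qb{r}{j}$, reindexed with $l=r-j$; specializing $x=q^k$ gives (\ref{one}). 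One small stylistic point: this $q$-analogue of the falling-factorial expansion is a sign-flipped reindexing of the $q$-binomial theorem the paper already cites from \cite{Br} (page 78), so you could cite that directly rather than reprove it; and your aside about M\"obius inversion, while morally correct, is not needed and would require a bit more care to make rigorous. None of this affects the validity of the argument.

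One mildly amusing circularity worth flagging: in the appendix the paper uses Lemma \ref{first} together with the $q$-binomial theorem to rederive the product formula for $p_{k,n}$, whereas your route derives the closed product form $q^{-kn}\qb{n}{r}\prod_{i=0}^{r-1}(q^k-q^i)$ first and then expands it. The two directions are inverse to one another, and either suffices; your version is arguably the more natural order of logic.
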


Following a suggestion of Dennis Stanton, we indicate how Lemma~\ref{first} can be used to derive the product formula for $p_{k,n}$ in the
\hyperref[intro]{Introduction}. By replacing $k$ by $n$, $n$ by $n+m$, and $r$ by $n-k$
in (\ref{one}), we get that the probability that a random $n \times
(n+m)$ matrix has rank $n-k$ is equal to
\begin{eqnarray*}
& & \frac{1}{q^{n(n+m)}} {\lleft[\matrix{n+m
\cr n-k} \rright]_q}
\sum_{l=0}^{n-k} (-1)^{n-k-l} {
\lleft[\matrix{n-k
\cr
l} \rright]_q} q^{nl + {n-k-l \choose2}}
\\
&&\qquad  =  \frac{1}{q^{n(n+m)}} {\lleft[\matrix{n+m
\cr
n-k} \rright]_q}
\sum_{l=0}^{n-k} (-1)^l {
\lleft[\matrix{n-k
\cr
l} \rright]_q} q^{n(n-k-l) + {l \choose2}}
\\
&&\qquad =  \frac{1}{q^{n(m+k)}} {\lleft[\matrix{n+m
\cr
n-k} \rright]_q}
\sum_{l=0}^{n-k} \biggl[ \frac{-1}{q^{n+1}}
\biggr]^l {\lleft[\matrix{n-k
\cr
l} \rright]_q}
q^{{l+1 \choose2}}.
\end{eqnarray*}
Plugging into the $q$-binomial theorem (page 78 of \cite{Br})
\[
(1+xq) \bigl(1+xq^2\bigr) \cdots\bigl(1+xq^r\bigr) =
\sum_{l=0}^r {\lleft[\matrix{r
\cr
l}
\rright]_q} q^{l(l+1)/2} x^l
\]
with $r=n-k$ and $x = -1/q^{n+1}$ gives that the probability that a
random $n \times(n+m)$ matrix over $\mathbb{F}_q$ has rank $n-k$ is
equal to
\[
\frac{1}{q^{n(m+k)}} {\lleft[\matrix{n+m
\cr
n-k} \rright]_q}
\bigl(1-1/q^n\bigr) \cdots\bigl(1-1/q^{k+1}\bigr).
\]
It follows from elementary manipulations that this is equal to
\[
\frac{1}{q^{k(m+k)}} \frac{\prod_{i=1}^{n+m} (1-1/q^i) \prod_{i=k+1}^n
(1-1/q^i)}{\prod_{i=1}^{n-k} (1-1/q^i) \prod_{i=1}^{m+k}
(1-1/q^i)}.
\]
\end{appendix}

\section*{Acknowledgements}
The authors thank Dennis Stanton and the referees for helpful comments.




\printaddresses

\end{document}